\newcommand{\comment}[1]{}
\def\hhmm{\number\hh:\ifnum\mm<10{}0\fi\number\mm}
\theoremstyle{definition} 
\newtheorem{theorem}{Theorem}
\newtheorem{prop}[theorem]{Proposition}
\newtheorem{lemma}[theorem]{Lemma}
\newtheorem*{defi}{Definition}
\newtheorem*{exam}{Example}
\newtheorem*{ack}{Acknowledgement}
\newtheorem{cor}[theorem]{Corollary}
\newtheorem{remark}[theorem]{Remark}
\theoremstyle{remark} 
\newtheorem*{rem}{Remark} 
\newtheorem*{nota}{Notation}
\newcommand\brac[1]{\langle #1 \rangle}
\newcommand\sep{\,:\,}
\newcommand\st {\mathrel{\ooalign{$\,\backepsilon$\cr\lower .7pt\hbox{\kern 1pt$-\,$}}}}
\newcommand\qf[1]{\mathrm{Quot}(#1)}	
\newcommand\spec{\mathrm{Spec}\,}	
\newcommand\sper{\mathrm{Sper}\,}	
\newcommand\supp{\mathrm{supp}}
\newcommand\Z{\mathbb{Z}}
\newcommand\Q{\mathbb{Q}}
\newcommand{\N}{\mathbb{N}}
\newcommand{\p}{\mathfrak{p}}
\newcommand\wo[1]{\backslash{\{#1\}}}
\newcommand*{\lhrarrow}{\ensuremath{\lhook\joinrel\relbar\joinrel\rightarrow}}
\newcommand*{\thrarrow}{\twoheadrightarrow}
\renewcommand\mod{\,\mathrm{mod}\,}
\newcommand{\ic}{\mathrm{ic}}
\begin{document}

\title{Uniqueness of real closure $*$ of Baer regular rings}
\makeatletter
\let\mytitle\@title
\makeatother
\author{Jose Capco\\
\href{mailto:capco@fim.uni-passau.de}{\small{capco@fim.uni-passau.de}}\\
\emph{\small{Universit\"at Passau, Innstr. 33, 94032 Passau, Germany}} }
\date{}
\thispagestyle{empty}
\maketitle

\pagestyle{fancy}
\fancyhead[R]{\mytitle}
\fancyhead[L]{J. Capco}
\tolerance=500

\begin{abstract}
It was pointed out in my last paper that there are rings whose real closure $*$ are not unique. In \cite{Capco} we also 
discussed some example of rings by which there is a unique real closure $*$ (mainly the real closed rings). 
Now we want to determine more classes of rings by which real closure $*$ is unique. The main results involve characterisations of 
domains and Baer regular rings having unique real closure $*$, and an example showing that regular rings need not be $f$-rings in order to
have a unique real closure $*$. The main objective here is to find characterisation for uniqueness of real closure $*$ for real regular rings 
that will primarily only require information of the prime spectrum and the real spectrum of the ring.
\begin{description}
\item[Mathematics Subject Classification (2000):] Primary 13J25; Secondary 13B22, 16E50
\item[Keywords:] real closed rings, real closed $*$ rings, tight/essential extension, algebraic extension, Baer von Neumann regular rings, 
$f$-rings.
\end{description}
\end{abstract}

\footnote{Supported by Deutsche Forschungsgemeinschaft and Universit\"at Passau.}

Unless otherwise stated all the rings in this paper are commutative unitary partially ordered rings (porings). If the partial
ordering is not specified then it can be assumed to be the sum of squares of the elements of the ring. The reader is assumed to know 
results in \cite{KS} Kapitel III. 

\begin{defi} 
Given a poring $A$, we usually write $A^+$ to mean the partial ordering of $A$ that is being considered. If $B$ is an
over-ring of a $A$ that is also a poring and if it has the property that $B^+\supset A^+$ we shall call it an \emph{over-poring} of $A$
\end{defi}

\begin{rem}
Throughout when we deal with a poring $A$ the Baer hull $B(A)$ (see \cite{Capco} Proposition 2) of the poring has the partial ordering 
$$B(A)^+:=\{\sum_{i=1}^n b_i^2a_i \sep n\in \N, b_i\in B(A), a_i\in A^+ \textrm{ for } i=1,\dots,n\} $$
\end{rem}

In \cite{SV} a real closure $*$ of a real ring was defined to be a real ring
that is essential integral extension of the ring that is real closed $*$. One notices therefore that the definition is rather ring theoretic
 and the partial ordering of the ring is not even being questioned. This may lead to several confusion and in fact it may be necessary that 
we allow the partial ordering of our rings to play a role in the definition. Imagine for instance $\Q(\sqrt 2)$, this is a real field and 
it is well-known (see \cite{epi} Example 2.1) that there are two total orderings of it , one is say $\alpha$ by which 
$\sqrt 2 \in \alpha$ (thus $\sqrt 2$ is positive)
and the other is say $\beta$ with $-\sqrt 2 \in \beta$ (thus $-\sqrt 2$ is positive). Set $A:=\Q(\sqrt 2)$, then if $\alpha\cap\beta$
is the partial ordering of $A$, we see that from the definition of \cite{SV} $A$ has two possible real closure $*$ which are not 
contained in each other (and are real closed fields).  Thus is indeed reasonable to require that the extension be a poring 
extension. 

\begin{defi}
Let $A$ be a real ring, then \underline{a} \emph{real closure $*$} of $A$ is a real ring $B$ such that
\begin{itemize}
\item $B$ is real closed $*$
\item $B$ is an essential and integral extension of $A$
\item $A^+\subset B^+$
\end{itemize}
\end{defi}

If we require this then $Q(\sqrt 2)$ with $\alpha$ or $\beta$ as partial ordering have a unique real closure $*$ but if its partial ordering is
$\alpha\cap\beta$ then the real closure $*$ is not unique. This is a definition that has been used (but not mentioned) in \cite{Capco}
, for instance in \cite{Capco} Theorem 23 we required that $\rho(A)$ (the real closure of a real ring $A$) to be essential over $A$. 
Essentiality of $\rho(A)$ is very much dependent on the partial ordering of $A$, as the construction of the real closure of $A$ is dependent
on the partial ordering of $A$ and it is therefore sound to have the partial ordering play a role in our definition of real closure $*$.

It is best that we define what we actually mean by the \emph{uniqueness of real closure $*$}. 

\begin{defi}
Let $A$ be a real ring
\begin{itemize}
\item[1.)] If $B$ and $C$ are over-rings of $A$, we say that they are \emph{$A$-isomorphic} iff there is an isomorphism of 
porings
$$f:B\stackrel{\sim}{\longrightarrow} C$$
such that $fi_B = i_C$, where $i_B:A\hookrightarrow B$ and $i_C:A\hookrightarrow C$ are the canonical poring injections. 
In other words $B$ and $C$ are $A$-isomorphic if there is a poring isomorphism between them which \emph{fixes} $A$. We may
use the notation 
$$B\cong_A C$$ 
to mean  $B$ and $C$ are $A$-isomorphic. Similarly one defines $A$-isomorphic for porings, if it is not stated we usually do
mean $A$-isomorphic as porings (i.e. the homomorphisms preserve order).
\item[2.)] We say that $A$ has a \emph{unique real closure $*$} iff the following holds
\begin{itemize}
\item[$\bullet$] There is a real closed $*$ ring $B$ which is an essential and integral extension of $A$
\item[$\bullet$] If there is another real closed $*$ ring, say $C$, which is essential and integral over $A$ then $C$ and $B$ are
$A$-isomorphic.
\end{itemize}
If $A$ does not satisfy the above, then we say that \emph{$A$ has no unique real closure $*$}.
\end{itemize}
\end{defi}

We note that we are particularly dealing with poring morphisms. So unless otherwise stated, all the maps and diagrams 
are in the category of porings.

\begin{nota}
Let $A$ be a real ring and $B$ be an over-poring of it. Then by $\ic(A,B)$ we mean the integral closure of $A$ in $B$ and with partial
ordering 
$$\ic(A,B)^+=B^+\cap \ic(A,B)$$ 
Thus we consider $\ic(A,B)$ as an object in the category of porings.
\end{nota}

\begin{remark}
By an essential extension or integral extension, unless otherwise stated. We mean an essential/integral extension of a 
poring which extends the partial ordering as well.
\end{remark}

\begin{lemma}\label{ie_of_id}
Let $A$ and $B$ be integral domains and $A\subset B$ as rings, then
\renewcommand{\labelenumi}{\roman{enumi}.}
\begin{enumerate}
\item If $B$ is integral over $A$, then $\qf B$ is integral over $\qf A$.
\item If $B$ is an essential extension of $A$ and $A$ is integrally closed in $B$ then $\qf A$ is integrally closed in $\qf B$
\end{enumerate}
\renewcommand{\labelenumi}{\arabic{enumi}.}
\end{lemma}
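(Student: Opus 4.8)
The plan is to treat the two parts separately; part (i) is a standard fact about integral extensions of domains, while part (ii) is where the essentiality hypothesis does the real work.

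For part (i), note that since $A\subset B$ are domains, $\qf A$ embeds canonically in $\qf B$; write $K=\qf A$ and $L=\qf B$, so $K\subseteq L$ are fields. Every $b\in B$ is integral over $A$, hence integral over the field $K$, hence algebraic over $K$. First I would observe that the relative algebraic closure of $K$ in $L$ (the elements of $L$ algebraic over $K$) is a subfield $K'$ of $L$. As $K'$ is a field containing $B$, it contains every quotient $b/s$ with $b,s\in B$, $s\neq 0$; that is, $L=\qf B\subseteq K'$. Thus every element of $\qf B$ is algebraic over $K$, equivalently integral over $K=\qf A$. (Concretely, for $x=b/s\in L$ both $b$ and $1/s$ are integral over $K$, the latter because $s$ is algebraic over the field $K$, and the integral elements form a ring.)

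For part (ii), take $x\in\qf B$ integral over $\qf A$, say $x^n+c_{n-1}x^{n-1}+\cdots+c_0=0$ with $c_i\in\qf A$; the goal is to show $x\in\qf A$. The first step is to clear the denominators of the coefficients: choosing $d\in A\setminus\{0\}$ with $dc_i\in A$ for all $i$ and multiplying the relation by $d^n$ shows that $y:=dx$ satisfies a monic equation over $A$, so $y$ is integral over $A$. The difficulty is that a priori $y$ only lies in $\qf B$ and not in $B$, so integral closedness of $A$ in $B$ cannot yet be invoked. This is precisely the point at which essentiality enters: I would introduce the denominator ideal $J=\{\,b\in B : by\in B\,\}$, which is an ideal of $B$, and which is nonzero because $y$ has a denominator in $B$. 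Since $B$ is an essential extension of $A$, the nonzero ideal $J$ meets $A$ nontrivially, producing $a\in A\setminus\{0\}$ with $ay\in B$. Now $ay$ is integral over $A$ (it is the product of the integral element $y$ by $a\in A$) and lies in $B$, so integral closedness forces $ay\in A$. Hence $adx=ay\in A$ with $ad\in A\setminus\{0\}$, and therefore $x=ay/(ad)\in\qf A$, as required.

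The only genuine obstacle is part (ii), and specifically the passage from an element that is integral over $A$ but lives in $\qf B$ to an honest element of $B$. The denominator-ideal construction combined with essentiality is exactly what bridges this gap, after which the integral-closedness assumption closes the argument immediately.
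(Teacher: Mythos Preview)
Your proof is correct and follows essentially the same strategy as the paper's. The only difference is the order of the two denominator-clearing steps in part (ii): the paper first uses essentiality to write the element of $\qf B$ as $b/c$ with $c\in A$ (so that after clearing the polynomial denominators the resulting integral element automatically lies in $B$), whereas you first clear the polynomial denominators and then invoke essentiality via the denominator ideal to land in $B$; this is a cosmetic reordering of the same argument.
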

\begin{proof} i. The first part is a basic result in field theory, the set of elements in $\qf B$ that is algebraic (or integral, as we 
are in fields) over $\qf A$ form a field, call it $K$. Then $B\subset K$ since $B$ is integral over $A$. Moreover $B\subset K\subset \qf B$ 
implies that $K=\qf B$. Thus $\qf B$ is integral over $\qf A$.

ii. Let $\frac{b}{c}$ be an element of $\qf B$, with $b,c\in B$ and $c\neq 0$. Because $B$ is essential over $A$, without loss 
of generality we may assume that $c\in A$. Suppose furthermore that $\frac{b}{c}$ is an 
integral element of $\qf A$. Then there is a polynomial 
$$f(T)=T^n + \sum_{i=0}^{n-1} T^i \frac{a_i}{x} \in \qf A [T] $$
with $a_i \in A$ and $x \in A\wo{0}$ and $\frac{b}{c}$ a zero of $f$. So 
$$f(b/c)=\left(\frac{b}{c}\right)^n + \sum_{i=0}^{n-1} \frac{b^i a_i}{xc^i} = 0$$
Now multiply $f(b/c)$ by $(cx)^n$ (which isnt $0$, since $c$ and $x$ arent), then 
$$c^nx^nf(b/c)= (bx)^n + \sum_{i=0}^{n-1} (bx)^i c^{n-i}x^{n-i-1}a_i = 0 $$
and therefore $bx\in B$ is a zero of 
$$T^n + \sum_{i=0}^{n-1} b_i T^i  \in A[T] $$
where $b_i := c^{n-i}x^{n-i-1}a_i \in A$ for $i=0,\dots,n-1$.
But $A$ is integrally closed in $B$, so $bx \in A$. This implies that 
$$b/c=bx/cx\in \qf A$$
\end{proof}

\begin{remark}
Note that in (ii) of above, essentiality is required. Since for instance $\Z$ is integrally closed in $\Z[T,T\sqrt 2]$ and 
their respective quotient fields are $\Q$ and $\Q(\sqrt2,T)$. Yet it is clear that $\Q$ isn't integrally closed in $\Q(\sqrt2,T)$.
\end{remark}

\begin{theorem}
A real integral domain has a unique real closure $*$ iff its quotient field has only one total ordering
\end{theorem}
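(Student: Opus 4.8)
The plan is to translate everything into field theory via quotient fields, where the classical Artin–Schreier theory controls real closures. Write $K:=\qf A$, equipped with the partial ordering generated by $A^+$; then the ``total orderings of $\qf A$'' are exactly the field orderings $Q$ of $K$ with $A^+\subseteq Q$ (this is the reading forced by the $\Q(\sqrt2)$ example: with partial order $\alpha\cap\beta$ there are two such $Q$, with $\alpha$ only one). First I would record two structural facts. (1) An essential integral extension of a domain is again a domain: if $B$ were to contain $x\neq0$ with $\ann(x)\neq0$, then $\ann(x)\cap A\ni a\neq0$ and $xB\cap A\ni a'\neq0$ give $aa'=0$ in the domain $A$, a contradiction; hence every real closure $*$ of $A$ is a domain. (2) The bridge I will invoke from \cite{SV}, \cite{Capco} is the characterisation of real closed $*$ \emph{domains}: a domain $B$ is real closed $*$ iff $\qf B$ is a real closed field and $B$ is integrally closed in $\qf B$. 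Together with Lemma~\ref{ie_of_id}(i) (integral extensions induce algebraic extensions of quotient fields), this is what converts statements about real closures $*$ of $A$ into statements about real closures of the ordered field $K$.

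For the \emph{backward} direction, assume $K$ has a unique total ordering $P$, let $R$ be its Artin–Schreier real closure, and set $B:=\ic(A,R)$ with ordering $R^+\cap B$. I claim $B$ is a real closure $*$ of $A$: it is integral over $A$ by construction; $\qf B=R$ because every $r\in R$ is algebraic over $K$, so $dr\in B$ for a suitable $0\neq d\in A$; it is essential over $A$ since a nonzero $b\in B$ satisfies an integral equation over $A$ whose constant term (nonzero, as $B$ is a domain) lies in $bB\cap A$; and $A^+\subseteq P=R^+\cap K\subseteq R^+$ gives $A^+\subseteq B^+$. Finally $B$ is real closed $*$ by the characterisation, since $\qf B=R$ is real closed and integral closures are integrally closed. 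For uniqueness, let $C$ be any real closure $*$ of $A$. It is a domain, $\qf C$ is real closed and algebraic over $K$ (Lemma~\ref{ie_of_id}(i)), and its ordering restricts to a field ordering of $K$ containing $A^+$, hence to $P$; so $\qf C$ is a real closure of $(K,P)$ and $\qf C\cong_K R$. Since $C$ is integrally closed in $\qf C$ and integral over $A$, one checks $C=\ic(A,\qf C)$, and the $K$-isomorphism $\qf C\to R$ (automatically order-preserving, as the ordering of a real closed field is the set of squares) carries $\ic(A,\qf C)$ onto $\ic(A,R)=B$, restricting to a poring isomorphism $C\cong_A B$.

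For the \emph{forward} direction I argue the contrapositive: suppose $K$ has two distinct orderings $Q\neq Q'$ containing $A^+$. An ordering of $K$ is determined by its restriction to $A$, since the sign of $a/b$ equals that of $ab\in A$; hence $Q$ and $Q'$ already differ on $A$, say $a\in A$ with $a>_Q 0$ and $a<_{Q'}0$. Forming $B_Q:=\ic(A,R_Q)$ and $B_{Q'}:=\ic(A,R_{Q'})$ from the respective real closures gives two real closures $*$ of $A$ exactly as above. Now $a\in B_Q^+$ while $a\notin B_{Q'}^+$, so any $A$-isomorphism $B_Q\to B_{Q'}$, which fixes $a$ and preserves order, would force $a\in B_{Q'}^+$, a contradiction. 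Thus $B_Q\not\cong_A B_{Q'}$ and $A$ has no unique real closure $*$.

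The step I expect to be the genuine obstacle is fact (2), the characterisation of real closed $*$ domains. Everything else is bookkeeping: once a real closed $*$ domain is known to have a real closed quotient field in which it is integrally closed, passing to $\qf{\,}$ reduces uniqueness of the real closure $*$ to uniqueness of the Artin–Schreier real closure of an ordered field, and the only genuinely order-theoretic input (that distinct orderings of $K$ are distinguished by an element of $A$, and that real closed field isomorphisms respect the ordering) is elementary. If the characterisation is not available off the shelf, I would instead derive it directly from the definition of real closed $*$ as admitting no proper essential integral real over-poring: a proper algebraic formally real extension of $\qf B$, or a proper integral element of $\qf B$ over $B$, would each yield such an over-poring, while conversely a real closed quotient field together with integral closedness blocks any proper essential integral real extension via Lemma~\ref{ie_of_id}(i) and Artin–Schreier.
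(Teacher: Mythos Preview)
Your proof is correct and follows essentially the same route as the paper: both directions pass to quotient fields, invoke the characterisation of real closed $*$ domains (\cite{SV} Proposition~2) together with Lemma~\ref{ie_of_id}(i), and reduce to Artin--Schreier uniqueness of the real closure of an ordered field. The only notable difference is in the forward direction: the paper argues structurally, using Lemma~\ref{ie_of_id}(ii) to show $\qf{A_i}$ is real closed and then deriving $K_1\cong K_2$ from $A_1\cong_A A_2$, whereas you pick an element $a\in A$ on which the two orderings disagree and obstruct any order-preserving $A$-isomorphism directly---a slightly more elementary variant that sidesteps Lemma~\ref{ie_of_id}(ii) (your observation $\qf{\ic(A,R)}=R$ makes it unnecessary).
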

\begin{proof}
"$\Leftarrow$" Let $A$ be a partially ordered integral domain such that $\qf A$ has only one total ordering. Let $B_1$ and $B_2$ be two
 real closure $*$ of $A$. Then $B_1$ and $B_2$ are real closed $*$ rings and are integral and essential extensions of $A$. By Lemma 2 in 
\cite{SV} $B_1$ and $B_2$ are integral domains (Lemma 2 in \cite{SV} states that the essential extensions of integral domains 
are integral domains). So we have a commutative diagram of porings below 
$$
\begindc{\commdiag}[1]
\obj(0,60)[A]{$A$}
\obj(100,60)[B]{$B_i$}
\obj(0,0)[C]{$\qf A$}
\obj(100,0)[D]{$\qf{B_i}$}
\obj(170,30){$i=1,2$}
\mor{A}{B}{}[1,3]
\mor{B}{D}{}[1,3]
\mor{A}{C}{}[-1,3]
\mor{C}{D}{}[1,3]
\enddc
$$
with the lower horizontal map being an integral extension of $\qf{B_i}$ (by the Lemma above). 

For $i=1,2$, $B_i$ is a real closed $*$ ring and so by \cite{SV} Proposition 2 $\qf{B_i}$ is a real closed field and $B_i$ is 
integrally closed in it. So $B_i$ is the integral closure of $A$ in $\qf{B_i}$ (upto $A$-isomorphism) and therefore by the Lemma above
$\qf A \hookrightarrow \qf{B_i}$ is an algebraic extension of $\qf A$ which is real closed. But we assumed that the $\qf A$ has only one 
point in its real spectrum and so up to $A$-isomorphism we can set 
$$R:=\qf{B_1}=\qf{B_2}$$
and $B_i$ being the integral closure of $A$ in $R$ (because $R$ is real closed $*$ and essential over $A$) we get upto $A$-isomorphism that
$B_1=B_2$.

"$\Rightarrow$" Let $A$ be a real integral domain with a unique real closure $*$. Suppose that $\qf A$ has more than one total ordering. 
Then there are real closed fields $K_1,K_2$ not containing each other (upto $A$-isomorphism) and yet algebraic (integral) extensions 
of $\qf A$. 

Now for $i=1,2$ let $A_i$ be the integral closure of $A$ in $K_i$. Thus we have the commutative diagram of porings below
$$
\begindc{\commdiag}[1]
\obj(0,70)[A]{$A$}
\obj(80,70)[B]{$A_i$}
\obj(80,0)[C]{$\qf{A_i}$}
\obj(180,70)[D]{$K_i$}
\obj(250,35){$i=1,2$}
\mor{A}{B}{}[1,3]
\mor{B}{C}{}[1,3]
\mor{B}{D}{}[1,3]
\mor{C}{D}{}[1,3]
\enddc
$$
And so by Lemma \ref{ie_of_id} $\qf{A_i}$ is integrally closed in $K_i$, thus $\qf{A_i}$ is a real closed field (see \cite{KS} Korollar of
p.17) with $A_i$ integrally closed in $\qf{A_i}$. We can thus conclude that $A_i$ is a real closed $*$ integral domain which is an essential
 and integral extension of $A$. So by uniqueness of real closure $*$, $A_1=A_2$ upto $A$-isomorphism and we can also set
$$R:=\qf{A_1}=\qf{A_2}$$
we thus have 
$$\qf A\hookrightarrow R\hookrightarrow K_i\qquad i=1,2$$
with $K_1,K_2$ being algebraic extensions of $\qf A$ and being real closed. But this only implies that $K_1=K_2=R$ upto $\qf A$-isomorphism.
This contradicts our original assumption for $K_1$ and $K_2$.
\end{proof}

\begin{prop}
Given a real domain $A$, $B$ is a real closure $*$ of $A$ iff
$B$ is the integral closure of $A$ in some real closed field $K$ algebraic over $\qf A$ with $K^+$ extending $\qf A^+$, and 
$B^+=B\cap K^+$.
\end{prop}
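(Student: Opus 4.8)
The plan is to read this proposition as a single-object repackaging of the argument already carried out in the preceding theorem: both implications are obtained by passing between $B$ and its quotient field and then invoking \cite{SV} Proposition 2 together with Lemma~\ref{ie_of_id}. So I would set up the same square $\qf A\hookrightarrow\qf B$ and transcribe the relevant passages, keeping careful track of the partial orders.

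For the forward implication, assume $B$ is a real closure $*$ of $A$. Since $A$ is a domain and $B$ is essential over it, $B$ is a domain by \cite{SV} Lemma 2. Put $K:=\qf B$; by \cite{SV} Proposition 2, $K$ is a real closed field in which $B$ is integrally closed, and by Lemma~\ref{ie_of_id}.i the integrality of $B$ over $A$ makes $K=\qf B$ algebraic over $\qf A$. I would then check $B=\ic(A,K)$: the inclusion $B\subseteq\ic(A,K)$ is just integrality of $B$ over $A$, while conversely any $k\in K$ that is integral over $A$ is integral over $B$ and hence lies in $B$, since $B$ is integrally closed in $K$. For the orderings, $B^+=B\cap K^+$ records that the order of the real closed $*$ ring $B$ is the restriction of the unique total order of its quotient field, and then $A^+\subseteq B^+\subseteq K^+$ forces $\qf A^+\subseteq K^+$ once the sum-of-squares definition of $\qf A^+$ is unwound.

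For the converse, let $B=\ic(A,K)$ with $K$ real closed and algebraic over $\qf A$, $K^+$ extending $\qf A^+$, and $B^+=B\cap K^+$. Then $B$ is a domain (a subring of the field $K$) and is integral over $A$ by construction. Clearing denominators in a minimal polynomial shows that each $k\in K$ has the form $k=(ak)/a$ with $a\in A\wo{0}$ and $ak$ integral over $A$, so $ak\in B$ and hence $\qf B=K$. As integral closure is idempotent, $B=\ic(A,K)$ is integrally closed in $K=\qf B$, so $B$ is a real closed $*$ integral domain, its quotient field being real closed and $B$ integrally closed in it (cf. \cite{SV} Proposition 2); and $A^+\subseteq\qf A^+\subseteq K^+$ together with $B^+=B\cap K^+$ gives $A^+\subseteq B^+$.

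Since there is no deep content, the only points requiring care are the ordering bookkeeping — making the explicit condition $B^+=B\cap K^+$ agree both with the $\ic$-ordering convention and with the intrinsic order of a real closed $*$ ring — and, in the converse direction, the derivation of essentiality, which was a hypothesis in the other direction. For the latter I would use that an integral extension of domains is automatically essential: given a nonzero ideal $I$ of $B$ and $0\neq b\in I$, a minimal monic relation $b^n+a_{n-1}b^{n-1}+\cdots+a_0=0$ over $A$ has $a_0\neq0$ (otherwise cancel $b$, as $B$ is a domain), whence $a_0=-b(b^{n-1}+\cdots+a_1)\in I\cap A$ is a nonzero element of $A$. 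This is the one small step not literally present in the theorem's proof, and I expect it to be the main thing to spell out.
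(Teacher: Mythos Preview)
Your argument is correct. The forward implication matches the paper's proof essentially verbatim: pass to $K:=\qf B$, invoke \cite{SV} Proposition~2 and Lemma~\ref{ie_of_id}(i), and read off $B=\ic(A,K)$.

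In the converse direction you take a genuinely different route from the paper. The paper works with the chain $A\subset B\subset\qf B\subset K$ and, rather than identifying $\qf B$ with $K$, applies Lemma~\ref{ie_of_id}(ii) (using that $K$ is essential over $B$ and $B$ is integrally closed in $K$) to conclude that $\qf B$ is algebraically closed in $K$; then \cite{KS} Korollar p.~17 gives that $\qf B$ is itself real closed, and \cite{SV} Proposition~2 finishes. Your approach is more direct: the clearing-denominators computation shows outright that $K=\qf B$, so one never needs Lemma~\ref{ie_of_id}(ii) or the \cite{KS} corollary. Both arguments are sound; yours is shorter and self-contained, while the paper's illustrates how Lemma~\ref{ie_of_id}(ii) was intended to be used. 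You also spell out that an integral extension of domains is essential and that $A^+\subset B^+$, two verifications the paper leaves entirely implicit in its converse; in that respect your write-up is more complete.
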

\begin{proof}
"$\Rightarrow$"
Let $B$ be a real closure $*$ of $A$, we then have the following poring monomorphism $A\hookrightarrow B$, with $B$ being integral
over $A$. Because of \cite{SV} Proposition 2, $\qf B$ is a real closed field and $B$ is integrally closed in it. We thus have the following
embeddings of partially ordered integral domains
$$A\lhrarrow B \lhrarrow \qf B$$
and $B$ is thus the integral closure of $A$ in $\qf B$ (as $\qf B$ is trivially essential over both $B$ and $A$). Moreover because $B$ 
is integral over $A$, by Lemma \ref{ie_of_id}(i) $\qf B$ is 
algebraic over $\qf A$. The part regarding the partial orderings are automatically satisfied, as we have throughout been dealing with 
poring monomorphisms the canonical partial orderings of $\qf A,\qf B$.

\vspace{5mm}
"$\Leftarrow$"
Let $K$ be a real closed field and suppose it satisfies the sufficiency conditions of the Proposition. Thus we assume that 
$$B=\ic(A,K), \quad B^+ =B\cap K^+$$
We therefore have the following poring embeddings
$$A\subset B\subset \qf B\subset K$$
$B$ is integrally closed in $K$ and $K$ is obviously essential over $B$, this implies (by Lemma \ref{ie_of_id}(ii)) that 
$\qf B$ is algebraically closed in $K$. By \cite{KS} Korollar p.17, $\qf B$ is therefore a real closed field. Thus by \cite{SV} Proposition
2, $B$ is a real closed $*$ ring.
\end{proof}

\begin{prop}\label{ic-inrc}
Let $A$ be a real von Neumann regular ring and let $B$ be an essential extension of $A$ such that it is a von Neumann regular 
real closed $*$ ring. Then one real closure $*$ of $A$ would be the integral closure of it in $B$.
\end{prop}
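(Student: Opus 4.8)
The plan is to set $C:=\ic(A,B)$, with its poring structure $\ic(A,B)^+=B^+\cap C$, and to check directly the three clauses in the definition of a real closure $*$: that $C$ is integral and essential over $A$, that $A^+\subseteq C^+$, and that $C$ is real closed $*$. Two of these are immediate. By construction $C$ is integral over $A$. For the orderings, since $B$ is an over-poring of $A$ we have $A^+\subseteq B^+$, hence $A^+\subseteq A\cap B^+\subseteq C\cap B^+=C^+$, so the partial order of $A$ extends to that of $C$. Thus the real content is the essentiality of $A\subseteq C$ and the claim that $C$ is real closed $*$.

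For essentiality I would use that $C$ lies between $A$ and $B$ and that every element of $C$ satisfies a monic equation over $A$. Given a nonzero ideal $J$ of $C$, pick $0\neq c\in J$ and an integral equation $c^{n}+a_{n-1}c^{n-1}+\dots+a_{0}=0$ with $a_i\in A$; the constant term $a_0=-c(c^{n-1}+a_{n-1}c^{n-2}+\dots+a_1)$ lies in $cC\cap A\subseteq J\cap A$, which settles the case $a_0\neq 0$. When $a_0=0$ I would pass to the idempotent support of $c$ in the von Neumann regular ring $B$, so that $c$ becomes a nonzerodivisor on a clopen piece of $\spec B$, and repeat, the von Neumann regularity of $A$ guaranteeing that the relevant idempotents are already available in $A$. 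The same intermediate position shows at once that $B$ is essential over $C$, a fact I use below.

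The heart of the proof is that $C$ is real closed $*$, and the strategy is to reduce to the field case settled in the preceding Proposition characterising real closures $*$ of domains. Since $C=\ic(A,B)$ it is integrally closed in $B$, and $B$ is a von Neumann regular, real closed $*$, order-compatible essential extension of $C$. As $B$ is von Neumann regular, $\spec B$ is Boolean and every residue field $B/\mathfrak{m}$ is a real closed field. Writing $\mathfrak{n}=\mathfrak{m}\cap A$, I would identify the residue field of $C$ at $\mathfrak{m}$ with the integral (equivalently, relative algebraic) closure of the field $A/\mathfrak{n}$ inside $B/\mathfrak{m}$; by the preceding Proposition applied to the domain $A/\mathfrak{n}$ this is a real closed field and exhibits a real closure $*$ of $A/\mathfrak{n}$. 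Collecting these residue data over the Boolean base, $C$ comes out von Neumann regular with real closed residue fields and integrally closed in the real closed $*$ ring $B$, whence $C$ is real closed $*$; this is the exact analogue, over the Boolean base, of the $\Leftarrow$ direction of that Proposition, where Lemma \ref{ie_of_id}(ii) played the corresponding role.

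The step I expect to be the main obstacle is precisely this passage to residue fields: I must justify that forming $\ic(A,B)$ commutes with passing to the stalks over $\spec B$, so that $C$ is genuinely von Neumann regular with the predicted residue fields, and that being real closed $*$ for a von Neumann regular ring is detected stalkwise together with integral-closedness in $B$. Once this localisation bookkeeping is secured, assembling the four verified clauses shows that $C=\ic(A,B)$ is a real closure $*$ of $A$.
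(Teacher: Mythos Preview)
Your approach diverges substantially from the paper's, and the step you flag as ``localisation bookkeeping'' is in fact the whole content of the argument, not a detail to be tidied up afterwards.

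The paper never argues stalkwise. For the von Neumann regularity of $\bar A:=\ic(A,B)$ it simply quotes \cite{raphael} Corollary~1.10 (the integral closure of a regular ring in a regular over\-ring is regular). For Baer-ness it gives a global idempotent argument: $B$ being real closed~$*$ contains every idempotent of $Q(B)\supseteq Q(\bar A)$; idempotents satisfy $x^2-x=0$, so by integral closedness they already lie in $\bar A$, whence $\bar A$ is Baer. For real closedness it passes through the $f$-ring machinery: $B$ is a real closed ring (\cite{Capco} Proposition~14), $\bar A$ is an integrally closed von Neumann regular subring of this $f$-ring, hence a sub-$f$-ring (\cite{Capco} Lemma~13), hence itself real closed (\cite{Capco} Theorem~9). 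Finally \cite{Capco} Theorem~15 turns ``Baer and real closed von Neumann regular'' into ``real closed~$*$''. Essentiality and the order extension are not even discussed separately; they come along for free once $\bar A$ sits between $A$ and $B$.

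Your residue-field plan does not reach these conclusions. Knowing that every $C/\mathfrak m$ is a real closed field tells you nothing about whether $C$ is Baer (equivalently, whether $\spec C$ is extremally disconnected), and the characterisation you would ultimately need, \cite{Capco} Theorem~15, requires Baer-ness explicitly. Likewise ``real closed'' for a von Neumann regular poring is not just the condition that all residue fields be real closed; the paper obtains it through the $f$-ring structure, again a global order-theoretic fact rather than a stalkwise one. The analogy you draw with Lemma~\ref{ie_of_id}(ii) and the preceding Proposition does not transfer: over a domain, integral closedness in a single real closed field already forces the quotient field to be real closed, whereas over a Boolean base you must in addition control idempotents and the lattice order globally. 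So the obstacle is not that $\ic$ might fail to commute with passage to stalks; it is that real closed~$*$-ness genuinely is not a stalkwise condition, and the idempotent and $f$-ring arguments of the paper are precisely what bridge that gap.
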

\begin{proof}
Denote $\bar A$ to be the integral closure of $A$ in $B$. By Corollary 1.10 in \cite{raphael}, $\bar A$ is a von Neumann regular ring. 
Moreover $B$ is essential over $\bar A$. Thus $Q(B)$ may be regarded as an over-ring of $Q(\bar A)$ (see \cite{Storrer} Satz 10.1). And the diagram 
of porings below 
$$
\begindc{\commdiag}[1]
\obj(0,60)[A]{$\bar A$}
\obj(100,60)[B]{$B$}
\obj(0,0)[C]{$Q(\bar A)$}
\obj(100,0)[D]{$Q(B)$}
\mor{A}{B}{}[1,3]
\mor{B}{D}{}[1,3]
\mor{A}{C}{}[-1,3]
\mor{C}{D}{}[1,3]
\enddc
$$
is commutative. Since $B$ is real closed $*$, it contains all the idempotents of $Q(B)$ 
(thus also all idempotents of $Q(\bar A)$), and since $\bar A$ is integrally closed in $B$ it contains all the idempotents of $Q(\bar A)$. 
In other words $\bar A$ is Baer.

By Proposition 14 in \cite{Capco} $B$ is also a real closed ring. And because $\bar A$ is a von Neumann regular integrally closed
subring of $B$ (which is an $f$-ring), $\bar A$ must be a sub-$f$-ring of $B$ (see Lemma 13 in \cite{Capco}). Theorem 9 in 
\cite{Capco} tells us that an integrally closed von Neumann regular sub-$f$-ring of a real closed ring is itself real closed. Thus 
$\bar A$ is Baer and real closed and from the characterization of real closed $*$ von Neumann regular rings found in \cite{Capco} Theorem 15
this means that $\bar A$ is real closed $*$. 
\end{proof}

\begin{defi}
If a commutative unitary ring $A$ has and over-ring $B$. Then by Zorn's Lemma, there exists an ideal $I$ of $B$ such that 
\begin{itemize}
\item $I\cap A =\brac{0}$
\item If $J\supset I$ is an ideal of $B$ and if $J\cap A = \brac{0}$ then $J=I$
\end{itemize}
This ideal then makes the canonical ring homomorphism $A\rightarrow B/I$ an essential extension of $A$ (therefore $B/I$ can also be 
considered as an over-ring of $A$). For such an $I$ we say that \emph{$I$ makes $A\rightarrow B \rightarrow B/I$ an essential extension (of $A$)}
\end{defi}

\begin{prop}\label{rcrs_to_epof}
If $A$ is a real von Neumann regular ring and if $B$ is a real closure $*$ of $A$, then $B$ is actually the integral closure of $A$ in $R/I$ 
for some product of real closed fields $R:=\prod_{\spec A} K_\p$ with $K_\p$ algebraic over $A/\p$ , $(A/\p)^+\subset K_\p^2$ 
and some ideal $I$ of $R$ making 
$$A\hookrightarrow R \rightarrow R/I$$
an essential extension of $A$.
\end{prop}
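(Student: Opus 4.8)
The plan is to realise $B$ fibrewise over $\spec A$ and then recognise it as the integral closure inside a suitable quotient of a product of real closures of the residue fields. First I would observe that $B$ is von Neumann regular: since $B$ is real it is reduced, and since $B$ is integral over the zero-dimensional ring $A$ it is itself zero-dimensional, and reduced plus zero-dimensional gives von Neumann regular. Consequently every prime $\mathfrak q\in\spec B$ is maximal, the canonical map $B\hookrightarrow\prod_{\mathfrak q\in\spec B}B/\mathfrak q$ is injective, and because $B$ is real closed $*$ each residue field $B/\mathfrak q$ is a real closed field. As $B$ is integral over $A$, $B/\mathfrak q$ is algebraic over $A/(\mathfrak q\cap A)$, and $A^+\subset B^+$ forces the unique order of $B/\mathfrak q$ (its squares) to contain the image of $(A/(\mathfrak q\cap A))^+$.

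Since $A\hookrightarrow B$ is integral, $\pi\colon\spec B\to\spec A$ is surjective; choose a section $s\colon\spec A\to\spec B$, i.e. for each $\p$ a prime $\mathfrak q_{s(\p)}$ over $\p$, and set $K_\p:=B/\mathfrak q_{s(\p)}$ and $R:=\prod_{\p\in\spec A}K_\p$. Then $R$ is a product of real closed fields, the composites $A\to A/\p\hookrightarrow K_\p$ assemble to the canonical map $A\hookrightarrow R$, and each $K_\p$ is real closed, algebraic over $A/\p$, with $(A/\p)^+\subset K_\p^2$, as required. The crucial point is that $b\mapsto(b+\mathfrak q_{s(\p)})_\p$ is an injection $B\hookrightarrow R$. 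Here essentiality enters: for von Neumann regular rings it says every nonzero idempotent of $B$ dominates a nonzero idempotent of $A$, i.e. $\mathcal B(A)$ is dense in $\mathcal B(B)$, which means $\pi$ is an irreducible surjection of Boolean spaces. For such a map every nonempty clopen $U\subset\spec B$ satisfies $\{\p:\pi^{-1}(\p)\subset U\}\neq\emptyset$, so the image of \emph{any} section meets $U$; hence $\{\mathfrak q_{s(\p)}\}$ is dense in $\spec B$, and since $B$ is reduced $\bigcap_\p\mathfrak q_{s(\p)}=0$, giving the injectivity.

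Next I would produce $I$. By Zorn's Lemma pick an ideal $I$ of $R$ maximal with respect to $I\cap B=\brac 0$; then $B\hookrightarrow R/I$ is essential, and composing with the essential extension $A\hookrightarrow B$ shows $I$ makes $A\hookrightarrow R\to R/I$ essential. It remains to identify $\ic(A,R/I)$ with $B$. The inclusion $B\subset\ic(A,R/I)$ is clear. For the reverse, note that $R/I$ is real, since its residue fields are ultraproducts of the $K_\p$ (hence real closed) and $R/I$ is reduced; thus $R/I$ is an essential poring extension of $B$. Since $B$ is real closed $*$ it is integrally closed in every essential extension, and any element of $R/I$ integral over $A$ is a fortiori integral over $B$, hence already lies in $B$. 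Therefore $B=\ic(A,R/I)$, which is the asserted representation. The order-theoretic verifications ($B^+\subset(R/I)^+$, etc.) are routine given that each $K_\p$ carries its unique order.

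\textbf{The main obstacle.} I expect the genuine difficulty to be the injectivity/density step of the second paragraph: the product $R$ is indexed only by $\spec A$, with a single real closed field per prime of the base, whereas $\spec B$ is in general strictly larger (the idempotents of $B$ form the Dedekind--MacNeille completion of those of $A$, so $\pi$ is far from injective). Converting essentiality into irreducibility of $\pi$, and irreducibility into density of an arbitrary section, is exactly what allows such a base-indexed product to recapture all of $B$; this is the crux of the argument. The closing identification of the integral closure is comparatively routine, provided one may cite that a real closed $*$ ring admits no proper essential integral poring extension.
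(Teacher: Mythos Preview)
Your argument is correct and follows the same skeleton as the paper: choose one prime of $B$ over each $\p\in\spec A$, set $K_\p=B/\mathfrak q_{s(\p)}$ and $R=\prod_\p K_\p$, pass to an essential quotient $R/I$, and identify $B$ with $\ic(A,R/I)$ using that a real closed $*$ ring is integrally closed in every essential real over-ring.

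The one substantive difference is the step you single out as ``the main obstacle''. You prove that $B\hookrightarrow R$ is already injective, by translating essentiality into irreducibility of $\pi:\spec B\to\spec A$ and deducing that the image of any section is dense, so $\bigcap_\p\mathfrak q_{s(\p)}=0$; you then pick $I$ maximal with $I\cap B=0$. The paper never proves $B\to R$ injective. It simply chooses $I$ maximal with $I\cap A=0$, so that $A\to R/I$ is injective, and then observes that essentiality of $A\hookrightarrow B$ forces $B\to R/I$ to be injective in one line: if $0\neq b\in B$, there is $b'\in B$ with $0\neq bb'\in A$, and $bb'$ has nonzero image in $R/I$. Thus what you identify as the crux dissolves under the paper's ordering of the steps; your irreducibility/density argument is correct and of independent interest, but unnecessary for this proposition.
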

\begin{proof}
We know that $B$ is essential and integral over $A$ (as porings), thus by \cite{raphael} Corollary 1.10 it is a von Neumann regular 
ring and by \cite{raphael} Lemma 1.14 the canonical spectral map 
$$\phi: \spec B \rightarrow \spec A\qquad \phi(\p):=\p\cap A$$ 
is surjective. Now to make everything short we set $X:=\spec A$ and $Y:=\spec B$.

For each $x\in X$, choose one representative $y_x\in Y$ such that $\phi(y_x)=x$. We now have the following commutative diagram
$$
\begindc{\commdiag}[1]
\obj(0,60)[A]{$A$}
\obj(60,60)[B]{$B$}
\obj(120,60)[C]{$\displaystyle\prod_{y\in Y} B/y$}
\obj(120,0)[D]{$\displaystyle\prod_{x\in X} B/y_x$}
\mor{A}{B}{}[1,3]
\mor{A}{D}{$f$}[-1,3]
\mor{B}{C}{}[1,3]
\mor{C}{D}{$\pi$}[1,0]
\enddc
$$
where $\pi$ is just a projection and $f$ is just an extension of the canonical map 
$$A\lhrarrow \prod_{x\in X} A/x$$
(since $A/x\hookrightarrow B/y_x$ is a real field extension) and the other unlabelled maps are canonical injections.

Let us set $R:=\displaystyle\prod_{x\in X} B/y_x$ and by Zorn's Lemma choose ideal $I$ of $R$ making
$$
\begindc{\commdiag}[1]
\obj(0,0)[A]{$A$}
\obj(60,0)[B]{$R$}
\obj(120,0)[C]{$R/I$}
\mor{A}{B}{$f$}[1,3]
\mor{B}{C}{}
\enddc
$$
an essential extension of $A$. Then we have the following commutative diagram of porings
$$
\begindc{\commdiag}[1]
\obj(0,60)[A]{$A$}
\obj(100,60)[B]{$B$}
\obj(100,0)[C]{$R/I$}
\mor{A}{B}{}[1,3]
\mor{B}{C}{}[1,0]
\mor{A}{C}{}[1,3]
\enddc
$$
with the vertical map being a monomorphism because both the diagonal and the horizontal maps are essential extensions. 
So $B$ can be considered as a sub-poring of $R/I$. In fact the $B\hookrightarrow R/I$ is an essential extension of $B$ and
$B$ is the integral closure of $A$ in $R/I$ (by the real closed $*$-ness of $B$).
\end{proof}

\begin{theorem}\label{vNr_rcs_iso}
Let $A$ be a real von Neumann regular ring and let $B$ and $C$ be two real closure $*$ of $A$. Set $X:=\spec A$, $Y:=\spec B$ and
$Z:=\spec C$, then $B$ and $C$ are 
$A$-isomorphic iff 
for all $x \in X$ there exists  $y_x \in Y$ and $z_x \in Z$ such that  
$$y_x\cap A=z_x\cap A=x$$   
and that the real closed fields $B/y_x$ and $C/z_x$ are ($A/x$)-isomorphic.
\end{theorem}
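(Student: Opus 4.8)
The plan is to prove the two implications separately; the forward direction is formal, while the backward direction carries the real content. For "$\Rightarrow$", let $f\colon B\xrightarrow{\sim} C$ be a poring isomorphism with $f|_A=\mathrm{id}$. Since $B$ is an essential integral extension of the real von Neumann regular ring $A$, the spectral map $\spec B\to\spec A$ is surjective by \cite{raphael} Corollary 1.10 and Lemma 1.14, so for each $x\in X$ I can choose $y_x\in Y$ with $y_x\cap A=x$. I then set $z_x:=f(y_x)$, which is prime because $f$ is a ring isomorphism; as $f$ fixes $A$ one gets $z_x\cap A=y_x\cap A=x$, and $f$ induces a ring isomorphism $B/y_x\xrightarrow{\sim} C/z_x$ restricting to the identity on $A/x$. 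As real closed fields carry a unique ordering this is automatically a poring isomorphism, so $B/y_x$ and $C/z_x$ are $(A/x)$-isomorphic.

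For "$\Leftarrow$", I would use Proposition \ref{rcrs_to_epof} with the \emph{given} representatives: writing $R_B:=\prod_{x\in X}B/y_x$ there is an ideal $I_B$ making $A\to R_B\to R_B/I_B$ essential with $B=\ic(A,R_B/I_B)$, and likewise $C=\ic(A,R_C/I_C)$ with $R_C:=\prod_{x\in X}C/z_x$. The hypothesis gives for each $x$ a poring isomorphism $g_x\colon B/y_x\xrightarrow{\sim} C/z_x$ over $A/x$, and these assemble into a poring isomorphism $g:=\prod_x g_x\colon R_B\xrightarrow{\sim} R_C$ with $g\circ f_B=f_C$, where $f_B,f_C$ denote the structure maps $A\to R_B,\ A\to R_C$ of the proposition. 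Transporting $I_B$ across $g$, the ideal $g(I_B)$ again makes $A\to R_C\to R_C/g(I_B)$ essential, and $g$ descends to a poring isomorphism $R_B/I_B\xrightarrow{\sim} R_C/g(I_B)$ fixing $A$. Because integral closure is preserved under isomorphism, this restricts to an $A$-isomorphism $B\xrightarrow{\sim}\ic(A,R_C/g(I_B))=:C'$, so that $B\cong_A C'$ and the problem reduces to comparing $C'$ with $C$.

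The hard part is this last comparison: $C'$ and $C$ are both the integral closure of $A$ inside a quotient of the \emph{same} product $R_C=\prod_x C/z_x$, differing only in the choice of maximal ideal ($g(I_B)$ versus $I_C$) that renders the extension of $A$ essential. I would show this choice is immaterial up to $A$-isomorphism: for any two ideals $I,J$ of $R_C$ with $A\to R_C/I$ and $A\to R_C/J$ essential, the rings $\ic(A,R_C/I)$ and $\ic(A,R_C/J)$ are $A$-isomorphic. The decisive point is that the local field data $A/x\hookrightarrow C/z_x$ is now literally the same on both sides, so the only remaining freedom is the gluing recorded by the ideal; I expect to eliminate it by showing that essentiality of $A$ in $R_C/I$ forces the image of $\ic(A,R_C)$ to be an $A$-isomorphic copy independent of $I$, using the characterisation of real closed $*$ von Neumann regular rings in \cite{Capco} Theorem 15 to control these integral closures. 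Composing $B\cong_A C'\cong_A C$ then produces the required $A$-isomorphism, and I anticipate that verifying this ideal-independence is where essentially all the work lies.
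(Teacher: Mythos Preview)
Your forward direction is fine (and more explicit than the paper's ``Clear!'').

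For ``$\Leftarrow$'' you have made the argument harder than necessary, and the gap you yourself flag---the ``ideal--independence'' of $\ic(A,R_C/I)$---is genuine: you do not prove it, and the tool you reach for (\cite{Capco} Theorem~15) would need the ambient quotient $R_C/I$ to be real closed $*$, hence Baer, which the hypotheses of the theorem do not guarantee. The paper sidesteps the whole difficulty by a small but decisive change of strategy. It does \emph{not} invoke Proposition~\ref{rcrs_to_epof} to represent $B$ and $C$ as integral closures inside two separate quotients $R_B/I_B$ and $R_C/I_C$. Instead, after identifying
\[
R:=\prod_{x\in X} B/y_x \;\cong_A\; \prod_{x\in X} C/z_x
\]
via the given $(A/x)$-isomorphisms, it chooses a \emph{single} ideal $I\unlhd R$ making $A\to R/I$ essential, and observes that the composites
\[
B\hookrightarrow\prod_{y\in Y}B/y\to R\to R/I,\qquad
C\hookrightarrow\prod_{z\in Z}C/z\to R\to R/I
\]
are injective (their kernels meet $A$ trivially, and $B$, $C$ are essential over $A$). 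Now $B$ and $C$ both sit inside the \emph{same} ring $R/I$, and $R/I$ is essential over each of them; being real closed $*$, each is integrally closed in $R/I$, so each coincides with $\ic(A,R/I)$. The orderings agree automatically since real closed von Neumann regular rings carry a unique partial order. Your two-ideal detour and the subsequent comparison of $C'$ with $C$ simply never arise: embed both real closures into one common essential quotient from the outset, and the conclusion drops out immediately.
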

\begin{proof}
"$\Rightarrow$" Clear!

"$\Leftarrow$" For each $x\in X$ we have the following commutative diagram
$$
\begindc{\commdiag}[1]
\obj(0,60)[O]{$A$}
\obj(60,60)[A]{$A/x$}
\obj(160,60)[B]{$B/y_x$}
\obj(160,0)[C]{$C/z_x$}
\mor{A}{B}{}[1,3]
\mor{B}{C}{$\cong$}[1,0]
\mor{A}{C}{}[1,3]
\mor{O}{A}{}[1,5]
\enddc
$$
We therefore also have 
$$
\begindc{\commdiag}[1]
\obj(0,80)[O]{$A$}
\obj(60,80)[A]{$\displaystyle\prod_{x\in X} A/x$}
\obj(160,80)[B]{$\displaystyle\prod_{x\in X} B/y_x$}
\obj(160,0)[C]{$\displaystyle\prod_{x\in X} C/z_x$}
\mor{A}{B}{}[1,3]
\mor{B}{C}{$\cong$}[1,0]
\mor{A}{C}{}[1,3]
\mor{O}{A}{}[1,3]
\enddc
$$
So $\prod_{x\in X} B/y_x$ and $\prod_{x\in X} C/z_x$ are $A$- and ($\prod_{x\in X} A/x$)-isomorphic and we can identify them as the same
poring, setting say 
$$R:= \prod_{x\in X} B/y_x \cong \prod_{x\in X} C/z_x$$
We have moreover the following commutative diagram
$$
\begindc{\commdiag}[1]
\obj(120,60)[A]{$A$}
\obj(170,60)[B]{$B$}
\obj(240,60)[C]{$\displaystyle\prod_{y\in Y} B/y$}
\obj(120,0)[D]{$R$}
\obj(70,60)[E]{$C$}
\obj(0,60)[F]{$\displaystyle\prod_{z\in Z} C/z$}
\mor{A}{B}{}[1,3]
\mor{A}{D}{}[-1,3]
\mor{B}{C}{}[1,3]
\mor{C}{D}{$\pi_1$}[1,5]
\mor{A}{E}{}[-1,3]
\mor{E}{F}{}[1,3]
\mor{F}{D}{$\pi_2$}[-1,5]
\enddc
$$
, where $\pi_1$ and $\pi_2$ are just projections. Now by Zorn's Lemma we can find an ideal $I$ of $R$ making 
$A\hookrightarrow R\thrarrow R/I$ an essential extension of $A$.
So we have the following commutative diagram 
$$
\begindc{\commdiag}[1]
\obj(0,60)[A]{$A$}
\obj(100,60)[B]{$B$}
\obj(0,0)[C]{$C$}
\obj(100,0)[D]{$R$}
\obj(170,0)[E]{$R/I$}
\mor{A}{B}{}[1,3]
\mor{B}{D}{}[1,0]
\mor{A}{C}{}[-1,3]
\mor{C}{D}{}[1,0]
\mor{A}{D}{}[1,3]
\mor{D}{E}{}[1,5]
\enddc
$$
which becomes ($B\rightarrow R/I$ and $C\rightarrow R/I$ become monomorphisms, as $B$ and $C$  are essential extensions of $A$)
$$
\begindc{\commdiag}[1]
\obj(0,60)[A]{$A$}
\obj(100,60)[B]{$B$}
\obj(0,0)[C]{$C$}
\obj(100,0)[D]{$R/I$}
\mor{A}{B}{}[1,3]
\mor{B}{D}{}[1,3]
\mor{A}{C}{}[-1,3]
\mor{C}{D}{}[1,3]
\mor{A}{D}{}[1,3]
\enddc
$$
Now the above diagram of porings have all its morphisms as essential extensions. Moreover, recall that $B$ and $C$ are 
real closed $*$ rings and integral extensions of $A$. $R/I$ is a real ring and an essential extension of both $B$
and $C$, thus both $B$ and $C$ are integrally closed in $R/I$ (by the real closed $*$-ness of $B$ and $C$). Thus they are $A$-isomorphic to
the integral closure of $A$ in $R/I$. Note that here, we hardly concerned ourselves in checking whether the partial ordering matched, as $B$ 
and $C$ are real closed rings (by \cite{Capco} Proposition 14) and so their only partial ordering would be the squares of their elements
 (see \cite{Schw1} Proposition 2.3.10, they are also $f$-rings which means that their partial orderings cannot be strengthened, see
for instance \cite{SM} Proposition 1.11).
\end{proof}

\begin{defi} $\bullet$ Henceforth, when we say a real field is a \emph{real field extension} of another, we mean that the embedding is a poring morphism (i.e. the
 partial ordering of the lower field is also extended).
\item[$\bullet$] Let $A$ be a reduced poring, then an over-poring $B$ of $A$ is called an \emph{epof} 
(extension of product of fields) of $A$ if 
$$ B=\prod_{\p\in\spec A} K_\p\qquad \textrm{with }B^+=B^2$$
where $K_\p$ are real closed fields algebraic over $\qf{A/\p}$ and extending the partial ordering of $A^+/\p$
(i.e. $A^+/\p\subset K_\p^2$).
\end{defi}

\begin{nota}
If $A$ is a poring, then for any prime cone $\alpha\in\sper A$ we can associate it (bijectively) to a real ideal $\p=\supp(\alpha)$
and a real closed field $T$ (see \cite{KS} Definition 1a) which is algebraic over $\qf{A/\p}$ and extends the partial ordering $A^+/\p$, 
this real closed field will be denoted by $\rho(\alpha)$.
\end{nota}

\begin{cor} \label{Bvnr_unique}
Let $A$ be a real Baer von Neumann regular ring, then $A$ has a unique real closure $*$ iff there is an $X\subset\sper A$ 
such that for any epof $B$ of $A$ and any ideal $I$ of $B$ making $A\hookrightarrow B\rightarrow B/I$ an essential extension we have
\begin{equation}\label{vnr_eq} 
X=\{\alpha\in\sper A : \rho(\alpha)\cong_{A(\alpha)} C(\alpha,\p) \textrm{ for some } \p\in V_B(I+\supp(\alpha))\}
\end{equation}
where $A(\alpha):=A/\supp(\alpha)$ and $C(\alpha,\p):=\ic(A/\supp(\alpha),B/\p)$.
\end{cor}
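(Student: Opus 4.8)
The plan is to parametrise the real closures $*$ of $A$ by pairs $(B,I)$ and to prove that the set in \eqref{vnr_eq} is exactly the complete $A$-isomorphism invariant of the associated real closure, so that uniqueness translates into constancy of this set. First I would record, via Proposition \ref{rcrs_to_epof}, that every real closure $*$ of $A$ has the form $\bar A_{(B,I)}:=\ic(A,B/I)$ for some epof $B$ and some ideal $I$ making $A\hookrightarrow B\rightarrow B/I$ essential; conversely Proposition \ref{ic-inrc}, together with the fact that $B/I$ is a von Neumann regular essential extension of $A$, shows each such $\ic(A,B/I)$ is a real closure $*$. Thus $\{\bar A_{(B,I)}\}$ is precisely the family of real closures $*$ of $A$, and uniqueness means these are pairwise $A$-isomorphic.

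The heart of the argument is an identification. Fix $(B,I)$, write $\bar A:=\bar A_{(B,I)}$, and for $x=\supp(\alpha)\in\spec A$ let $S_x$ be the set of $\alpha'\in\sper A$ with $\supp(\alpha')=x$ that are \emph{realised} by $\bar A$, i.e. $\rho(\alpha')\cong_{A/x}\bar A/y$ for some $y\in\spec\bar A$ over $x$. I would prove that \eqref{vnr_eq} computes exactly these, that is $X_{(B,I)}=\bigcup_x S_x$. For the inclusion $\subseteq$, given $\p\in V_B(I+\supp(\alpha))$ set $y:=(\p/I)\cap\bar A$; since $A$ is von Neumann regular $\p\cap A$ is maximal, so $\p\cap A=\supp(\alpha)$ and $\bar A/y\hookrightarrow B/\p$ is integral over $A(\alpha)$, giving $\bar A/y\subseteq C(\alpha,\p)$. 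Now $B$ is a product of real closed fields, hence real reduced, so every residue field $B/\p$ is formally real; and $\bar A/y$ is real closed, being a residue field of a real closed $*$ von Neumann regular ring (cf. \cite{Capco}). Since a real closed field admits no proper formally real algebraic extension and $C(\alpha,\p)$ is algebraic over $A(\alpha)\subseteq\bar A/y$, we conclude $C(\alpha,\p)=\bar A/y$, with matching orderings because real closed fields carry a unique ordering. Hence $\rho(\alpha)\cong_{A(\alpha)}C(\alpha,\p)$ forces $\alpha\in S_x$. For $\supseteq$ I would invoke surjectivity of $\spec(B/I)\rightarrow\spec\bar A$ for the essential extension $\bar A\hookrightarrow B/I$ of von Neumann regular rings (\cite{raphael}) to lift each such $y$ to a $\p\in V_B(I+\supp(\alpha))$ with $C(\alpha,\p)=\bar A/y$.

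With the identification established, both directions are short. Because $\bar A$ is integral and essential over $A$, \cite{raphael} Lemma 1.14 gives surjectivity of $\spec\bar A\rightarrow\spec A$, so each $S_x$ is nonempty; moreover, as $\bar A/y$ is a real closed algebraic extension of $A/x$ it is the real closure of $A/x$ for the ordering it induces, whence the residue fields of $\bar A$ over $x$ are, up to $A/x$-isomorphism, exactly $\{\rho(\alpha'):\alpha'\in S_x\}$. For \emph{uniqueness $\Rightarrow$ existence of $X$}: an $A$-isomorphism $\bar A_{(B,I)}\cong_A\bar A_{(B',I')}$ induces a homeomorphism of spectra over $\spec A$ carrying residue fields to $A/x$-isomorphic residue fields, hence preserves each $S_x$, so $X_{(B,I)}=X_{(B',I')}$; by uniqueness this common value may be taken as $X$. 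For \emph{existence of $X$ $\Rightarrow$ uniqueness}: if $X_{(B,I)}=X_{(B',I')}$ for all choices, then intersecting with $\supp^{-1}(x)$ shows the fibres $S_x$ agree for any two real closures $*$; choosing any $\alpha\in S_x$ (nonempty) yields primes $y,z$ over $x$ with $\bar A_{(B,I)}/y\cong_{A/x}\rho(\alpha)\cong_{A/x}\bar A_{(B',I')}/z$, so Theorem \ref{vNr_rcs_iso} gives $\bar A_{(B,I)}\cong_A\bar A_{(B',I')}$, i.e. the real closure $*$ is unique.

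The main obstacle I anticipate is the residue-field identification $C(\alpha,\p)=\bar A/y$ together with the surjectivity of $\spec(B/I)\rightarrow\spec\bar A$ required for the $\supseteq$ inclusion: once \eqref{vnr_eq} is known to coincide \emph{exactly} with the intrinsic invariant $\bigcup_x S_x$, and not merely up to the ambiguous choice of representatives $y_x$, the remaining bookkeeping reduces to the uniqueness of the real closure of an ordered field and a direct appeal to Theorem \ref{vNr_rcs_iso}. Care is needed precisely in controlling which primes $\p$ of $B$ survive in $B/I$ and contract to a prescribed prime of $\bar A$, so that the orderings realised by $\bar A$ are captured faithfully by the variety $V_B(I+\supp(\alpha))$.
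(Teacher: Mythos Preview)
Your proposal is correct and follows essentially the same approach as the paper. The paper defines $X$ via the (assumed unique) real closure $C$ and then, by embedding $C$ into each $B/I$ (using uniqueness to identify $C\cong_A\ic(A,B/I)$), verifies \eqref{vnr_eq}; for the converse it runs the same residue-field matching you describe and finishes with Theorem~\ref{vNr_rcs_iso}. Your organisation is slightly cleaner in that you isolate, as a standalone claim valid for \emph{every} pair $(B,I)$, the identification ``RHS of \eqref{vnr_eq} $=$ orderings of $A$ realised by $\bar A_{(B,I)}$'', and then use it symmetrically for both directions---whereas the paper proves this identification inside the ``$\Rightarrow$'' argument and implicitly replays half of it in ``$\Leftarrow$''.

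One point to tighten: when you assert that $\ic(A,B/I)$ is a real closure $*$ via Proposition~\ref{ic-inrc}, that proposition requires the ambient ring to be a von Neumann regular \emph{real closed $*$} ring, not merely a von Neumann regular essential extension of $A$. The paper supplies this: since $A$ is Baer, $B/I$ is Baer as well (\cite{raphael} Corollary~1.13), and then \cite{Capco} Theorem~15 gives that $B/I$ is real closed $*$. This is exactly where the Baer hypothesis on $A$ is used in this direction (and is the reason the subsequent theorem, which drops the Baer hypothesis, has to pass to the Baer hull $B(B/I)$).
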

\begin{proof}
"$\Rightarrow$" Let $C$ be the unique real closure $*$ of $A$ and set 
$$X:=\{\alpha\in \sper A \sep \rho(\alpha)\cong_{A(\alpha)} C/\p \textrm{ for some } \p\in V_C(\supp(\alpha))\}$$
Let $B$ be an epof of $A$ and $I$ be an ideal of $B$ making $A\rightarrow B/I$ an essential extension of $A$. We claim that 
$X=S$, where $S\subset \sper A$ is the right hand side of the equality in Equation (\ref{vnr_eq}).

"$\subset$" Suppose $\alpha\in\sper A$, $\p\in V_C(\supp(\alpha))$ and $\rho(\alpha)\cong_{A(\alpha)} C/\p$, then because 
$\ic(A,B/I)$ is a real closure $*$ of $A$ (since $A$ is Baer, and by \cite{raphael} Corollary 1.13 $B/I$ is also Baer and one can then 
use \cite{Capco} Theorem 15 and Proposition \ref{ic-inrc}), we may consider $C$ as an intermediate poring of $A$ and $B/I$. Thus 
$$A\subset C\subset B/I$$
Let $\mathfrak q\in V_B(I+\supp(\alpha))$ such that $(\mathfrak q +I)\cap C=\p$ (such a $\mathfrak q$ exist, because $A\hookrightarrow B/I$ is an 
essential extension). Then 
$$A/\supp(\alpha)\subset C/\p\subset B/\mathfrak q$$
are embedding of fields. Now since $C/\p$ and $B/\mathfrak q$ are real closed fields (recall $C$ is real closed $*$) and since $C/\p$ is algebraic
over $A/\supp(\alpha)$, we can write $C/\p=C(\alpha,\p)$.

"$\supset$" Let $\alpha\in \sper A$ and $\rho(\alpha)\cong_{A(\alpha)} C(\alpha,\p)$ for some $\p\in V_B(I+\supp(\alpha))$, then as before
we can write $A\subset C\subset B/I$  and $A/\supp(\alpha)\subset C/(\p\cap C) \subset B/\p$. Also we see that 
$\p\cap C\in V_C(\supp(\alpha))$ and $C/(\p\cap C)=C(\alpha,\p)$.

\vspace{5mm}
"$\Leftarrow$" This side of the proof will be done in steps. 
\begin{enumerate}
\item Let $A_i$, $i=1,2$ be real closure $*$ of $A$. From now on the variable $i$ varies always between $1$ and $2$.

\item By Proposition \ref{rcrs_to_epof}, there exists epof $R_i$ of $A_i$, and ideal $I_i$ of $R_i$ making 
$A\rightarrow R_i/I_i$ an essential extension of $A$ and such that 
$$A\lhrarrow A_i \lhrarrow R_i/I_i$$
are embedding of porings.

\item \label{Xs} Set 
$$X_i:=\{\alpha\in \sper A\sep \rho(\alpha)\cong_{A(\alpha)} \ic(A(\alpha),R_i/\p) \textrm{ for some } \p\in V_{R_i}(I_i+\supp(\alpha))\} $$
then by assumption $X_1=X_2$

\item \label{alpha_p} Note: 
	\begin{enumerate}
	
	\item \label{alpha} For all $\p\in V_{R_i}(I_i)$ there always exists an $\alpha\in \sper A$ such that $\p\cap A=\supp(\alpha)$ and
	$$\rho(\alpha)\cong_{A(\alpha)} \ic(A(\alpha),R_i/\p)$$
	This is by the very definition of the real spectrum of $A$ and the fact that $R_i/\p$ is a real closed field.
	
	\item \label{p} Because of the following composition of essential extensions
	$$A\lhrarrow A_i \lhrarrow R_i/I_i$$
	we can for each $\p\in \sper A$ find a $\p_i\in \spec A_i$ lying over $\p$ and a $\mathfrak q_i\in \spec (R_i/I_i)$ lying over $\p_i$.
	Here we didn't use essential extension, or that $A$ is Baer. Only the fact that the overrings  
	,of $A$ which is a von Neumann regular ring, are von Neuman regular (see \cite{raphael} Lemma 1.14).
	\end{enumerate}

\item So suppose $\p\in\spec A$ 
	\begin{enumerate}

	\item \label{A1} By (\ref{p}) there is a $\p_1\in\spec A_1$ and such that $\p_1\cap A=\p$ and (\ref{alpha}) there is an 
	$\alpha\in\sper A$ such that $\p=\supp(\alpha)$ and that 
	$$\rho(\alpha)\cong_{A(\alpha)} \ic(A(\alpha),R_1/\mathfrak q_1)$$
where $\mathfrak q_1\in V_{R_1}(I_1)$  with $\mathfrak q_1+I_1\in \spec (R_1/I_1)$ lying over $\p_1$ (existence of such a $\mathfrak q_1$ is by
(\ref{p})). Thus 
	$$A_1/\p_1\cong_{A(\alpha)} \ic(A(\alpha),R_1/\mathfrak q_1)$$
since 
	$$ A/\supp(\alpha)\lhrarrow A_1/\p_1 \lhrarrow R_1/\mathfrak q_1$$ 
	are real extensions of fields and $A_1/\p_1$ is algebraic over $A/\supp(\alpha)$ and itself real closed (as $A_1$ is real closed $*$).
	
	\item Now $\alpha$ in (\ref{A1}) is in $X_1$, so by (\ref{Xs}) $\alpha\in X_2$ as well and therefore 
	$$\exists\mathfrak q_2\in V_{R_2}(I_2+\supp(\alpha)) \st \rho(\alpha)\cong_{A(\alpha)} \ic(A(\alpha),R_2/\mathfrak q_2)$$
	If we set $\p_2=\mathfrak q_2\cap A_2$ we have real field extensions
	$$A/\supp(\alpha)\lhrarrow A_2/\p_2 \lhrarrow R_2/\mathfrak q_2$$
and therefore $\rho(\alpha)\cong_{A(\alpha)} A_2/\p_2$, which also means $A_1/\p_1\cong_{A/\p} A_2/\p_2$. 
Now one uses Theorem \ref{vNr_rcs_iso} to show that $A_1$ and $A_2$ are $A$-isomorphic.
	\end{enumerate}
\end{enumerate}
\end{proof}

\begin{theorem}
Corollary \ref{Bvnr_unique} holds even if we drop the assumption that $A$ is Baer.
\end{theorem}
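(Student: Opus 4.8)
The plan is to reduce to the Baer case of Corollary \ref{Bvnr_unique} by passing to the Baer hull. Let $B(A)$ be the Baer hull of $A$ (\cite{Capco} Proposition 2); it is a real von Neumann regular ring, $A\hookrightarrow B(A)$ is essential, it is generated over $A$ by the idempotents of its complete ring of quotients and so is integral over $A$, and it is Baer and carries the order extending $A^+$ described in the opening Remark. First I would show that $A$ and $B(A)$ have exactly the same real closures $*$. If $C$ is a real closure $*$ of $B(A)$ then, $A\hookrightarrow B(A)\hookrightarrow C$ being a composite of essential integral order-extensions, $C$ is a real closure $*$ of $A$. Conversely a real closure $*$ $C$ of $A$ is von Neumann regular and real closed $*$, hence Baer (as in the proof of Proposition \ref{ic-inrc} it contains every idempotent of $Q(C)=Q(A)$), so it contains $B(A)$ and is therefore essential and integral over $B(A)$, i.e. a real closure $*$ of $B(A)$. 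In particular $A$ has a unique real closure $*$ iff $B(A)$ does, and then the unique real closure $*$ $C$ is common to both.

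Next I would note that the implication ``existence of $X$ $\Rightarrow$ uniqueness'' proved in Corollary \ref{Bvnr_unique} never used that $A$ is Baer: it only invokes Proposition \ref{rcrs_to_epof}, \cite{raphael} Lemma 1.14 and Theorem \ref{vNr_rcs_iso}, all valid for arbitrary real von Neumann regular rings. Hence that half carries over unchanged, and it remains to prove ``uniqueness $\Rightarrow$ existence of $X$''. Here I would exploit the surjection $\pi\colon\spec B(A)\twoheadrightarrow\spec A$ (surjective since $B(A)$ is integral over $A$, \cite{raphael} Lemma 1.14). Because $B(A)$ is integral over $A$ and generated by idempotents, $B(A)/\mathfrak q=A/\pi(\mathfrak q)$ for every $\mathfrak q\in\spec B(A)$; restricting prime cones therefore gives a surjection $r\colon\sper B(A)\to\sper A$ with $\supp(r(\beta))=\pi(\supp\beta)$ and $\rho(r(\beta))=\rho(\beta)$.

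The useful reformulation is that, since all residue fields of an epof are real closed (ultraproducts of real closed fields are real closed), $\sper B=\spec B$ and $C(\alpha,\p)=\ic(A(\alpha),B/\p)$ is just the real closure of $A(\alpha)$ for the order it inherits from $A(\alpha)\hookrightarrow B/\p$; thus the right-hand side $S_{B,I}$ of (\ref{vnr_eq}) is exactly the image of $V_B(I)=\spec(B/I)$ under the restriction $\sper B\to\sper A$, and likewise for $B(A)$. Applying the ``$\Rightarrow$'' direction of Corollary \ref{Bvnr_unique} to the Baer ring $B(A)$ furnishes a set $X'\subseteq\sper B(A)$ with $S'_{B',I'}=X'$ for every epof $B'$ and essential-making ideal $I'$ of $B(A)$; I set $X:=r(X')\subseteq\sper A$. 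Given any epof $B=\prod_{\p}K_\p$ of $A$ and essential-making ideal $I$, I would form the pulled-back epof $B':=\prod_{\mathfrak q\in\spec B(A)}K_{\pi(\mathfrak q)}$ of $B(A)$, the reindexing embedding $\theta\colon B\hookrightarrow B'$ (so that $A\to B(A)\to B'$ and $A\to B\to B'$ agree), and $I':=\theta(I)B'$. On spectra $\theta$ pushes ultrafilters forward along $\pi$, so $\theta^{*}$ maps $V_{B'}(I')$ onto $V_B(I)$; since the restrictions $\sper B'\to\sper B(A)\xrightarrow{r}\sper A$ and $\sper B'\xrightarrow{\theta^{*}}\sper B\to\sper A$ coincide, this gives $r(S'_{B',I'})=S_{B,I}$. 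Combining, $S_{B,I}=r(S'_{B',I'})=r(X')=X$ for every $(B,I)$, which is the desired characterisation.

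The main obstacle is the single place where essentiality over $A$ and over $B(A)$ differ: I must guarantee that $I'=\theta(I)B'$ (or a suitable replacement with the same trace on $V_B(I)$) genuinely makes $B(A)\to B'/I'$ essential, for only then does Corollary \ref{Bvnr_unique} applied to $B(A)$ deliver $S'_{B',I'}=X'$. By transitivity of essential extensions this reduces to showing that $B/I\to B'/I'$ is an injective essential extension, and to prove it one must use that $\pi$ is not merely surjective but irreducible---the Gleason/absolute property of the Baer hull's spectrum---to transport essentiality of $A\to B/I$ through the idempotent completion $\theta$. Checking this essentiality, together with the injectivity $\theta^{-1}(\theta(I)B')=I$, is the technical heart; everything else is the bookkeeping that carries the Baer-case Corollary across the essential integral extension $A\hookrightarrow B(A)$.
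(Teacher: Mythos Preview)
Your approach differs from the paper's: you reduce to the Baer case via the Baer hull $B(A)$ of the base ring and pull epofs back along $\pi\colon\spec B(A)\to\spec A$, whereas the paper never changes the base. Given an epof $B$ and essential-making $I$, the paper forms $D:=B(B/I)$, notes that $D$ is a Baer regular real closed $*$ ring essential over $A$ (by \cite{Capco} Remark 1 and Corollary 4, and \cite{Capco} Theorem 15), so by the uniqueness hypothesis the real closure $*$ $C$ embeds in $D$ as $\ic(A,D)$; the two inclusions defining $X$ then follow by chasing the square $A/\supp(\alpha)\hookrightarrow C/\mathfrak q,\,B/\p\hookrightarrow D/\tilde\p$ with $\tilde\p\in\spec D$ lying over both $\mathfrak q$ and $\p+I$. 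Your reduction is more structural, but the paper's route is shorter precisely because the only Baer hull it takes is of a ring already in hand, so the essentiality of $B/I\hookrightarrow B(B/I)$ is automatic from the definition of the Baer hull.

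The step you flag is a genuine gap, not mere bookkeeping. You need $I'=\theta(I)B'$ to make $B(A)\to B'/I'$ essential, which you correctly reduce to injectivity and essentiality of $B/I\to B'/I'$. But $\theta\colon\prod_{\spec A}K_\p\to\prod_{\spec B(A)}K_{\pi(\mathfrak q)}$ is reindexing along a surjection that is typically far from bijective, and $B'$ contains many idempotents outside $\theta(B)$; showing $\theta^{-1}(\theta(I)B')=I$ and that every nonzero ideal of $B'/I'$ meets the image of $B/I$ genuinely requires the irreducibility of $\pi$ together with a careful argument about supports in infinite products, none of which you have supplied. This is plausibly fillable (the Gleason-cover property of $\spec B(A)\to\spec A$ is exactly the relevant fact), but until it is, the proposal is incomplete; the paper's choice of $B(B/I)$ rather than $B(A)$ sidesteps this transfer-of-essentiality problem entirely.
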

\begin{proof}
The proof is just a slight modification of Corollary \ref{Bvnr_unique}. The sufficiency of the Corollary was proved without 
making use of the assumption that $A$ is Baer, therefore we need only show the necessity. 

Again we let $C$ be the unique real closure $*$ of $A$ and set 
$$X:=\{\alpha\in \sper A \sep \rho(\alpha)\cong_{A(\alpha)} C/\p \textrm{ for some } \p\in V_C(\supp(\alpha))\}$$
we then show that if $B$ is an epof of $A$ and $I$ is an ideal of $B$ making $A\rightarrow B/I$ an essential extension
then 
$$X=\{\alpha\in\sper A : \rho(\alpha)\cong_{A(\alpha)} C(\alpha,\p) \textrm{ for some } \p\in V_B(I+\supp(\alpha))\}$$

\vspace{5mm}
"$\subset$" Let $\alpha\in\sper A$ and $\rho(\alpha)\cong_{A(\alpha)} C/\p$ for some $\p\in V_C(\supp(\alpha))$ then we 
have the following commutative diagram of porings
$$
\begindc{\commdiag}[1]
\obj(0,35)[A]{$A/\supp(\alpha)$}
\obj(95,70)[B]{$C/\p$}
\obj(95,0)[C]{$B/\mathfrak q$}
\obj(185,35)[D]{$D/\tilde \p$}
\mor{A}{B}{}[1,3]
\mor{B}{D}{}[1,3]
\mor{A}{C}{}[1,3]
\mor{C}{D}{}[1,3]
\enddc
$$
where $D=B(B/I)$ and $\tilde \p$ is a prime ideal of $D$ lying over $\p\in\spec C$ (this is possible because of \cite{raphael}
 Lemma 1.14), and $\mathfrak q\in\spec B$ such that $\mathfrak q+I=\tilde \p \cap B/I$.

Now $B$ and $D$ are real closed von Neumann regular rings (see \cite{Capco} Remark 1 and Corollary 4) thus 
$B/\mathfrak q$ and $D/\p$ are
real closed fields (\cite{Capco} Remark 1) with $B/\mathfrak q$ being integrally closed (or algebraically closed, as we are dealing with fields)
in $D/\p$. $C$ is itself a real closed ring integral over $A$, thus $C\p$ is a real closed field integral/algebraic over 
$A/\supp(\alpha)$, therefore we can as well conclude that 
$$\ic(A/\supp(\alpha),D/\tilde\p)=\ic(A/\supp(\alpha), B/\mathfrak q)=C/\p$$
and by our choice of $\mathfrak q$ we have $\mathfrak q\in V_B(I+\supp(\alpha))$.

\vspace{5mm}
"$\supset$" Let $\alpha\in\sper A$ and $\rho(\alpha)\cong_{A(\alpha)} C(\alpha,\p)$ for some $\p\in V_B(I+\supp(\alpha))$,
 then there exists $\tilde\p \in \spec D$ lying over $\p+I$. Thus we have the following commutative diagram of porings 
$$
\begindc{\commdiag}[1]
\obj(0,35)[A]{$A/\supp(\alpha)$}
\obj(95,70)[B]{$B/\p$}
\obj(95,0)[C]{$C/\mathfrak q$}
\obj(185,35)[D]{$D/\tilde \p$}
\mor{A}{B}{}[1,3]
\mor{B}{D}{}[1,3]
\mor{A}{C}{}[1,3]
\mor{C}{D}{}[1,3]
\enddc
$$
where $\mathfrak q = \tilde p \cap C$. Again however we have (similar arguments as in "$\subset$") 
$$C/\mathfrak q = \ic(A/\supp(\alpha), D/\tilde\p)=\ic(A/\supp(\alpha,B/\p)=C(\alpha,\p)$$
and by the choice of $\tilde\p$ we have $\mathfrak q\in V_C(\supp(\alpha))$.
\end{proof}

\begin{nota}
Supposing that a commutative ring $A$ is the product of rings, say  $A:=\prod_{x\in X} R_x$, then for any element $a\in A$ and
any $x\in X$ we let $a(x)$ to be the canonical \emph{projection} of $a$ in $R_x$.
\end{nota}

\begin{defi}
If $X$ is a topological space, we say $x\in X$ is \emph{isolated} iff $\{x\}$ is a clopen set. If $x\in X$ is not isolated then we say 
it is \emph{non-isolated}.
\end{defi}

\begin{prop}\label{vNrB_isol}
Let $A$ be a real von Neumann regular Baer ring and suppose that there exists a $\p\in\spec A$ such that 
$\sharp \supp^{-1}\{\p\} > 1$ and $\p$ is isolated in $\spec A$. Then $A$ does not have a unique real closure $*$. 
\end{prop}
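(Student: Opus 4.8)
The plan is to use the isolation of $\p$ to peel off $A/\p$ as a direct field factor of $A$, and then to lift the non-uniqueness of the real closure $*$ of that single field (which carries more than one ordering) up to the whole of $A$. Since $A$ is von Neumann regular, $\spec A$ is Boolean and each clopen set is $D(e)$ for a unique idempotent $e\in A$; as $\{\p\}$ is clopen we obtain $e\in A$ with $D(e)=\{\p\}$, so $e\notin\p$ but $e\in\mathfrak q$ for every $\mathfrak q\neq\p$, yielding a poring decomposition $A\cong k\times A'$ with $k:=Ae\cong A/\p$ a real field and $A':=A(1-e)$. Under $A\hookrightarrow\prod_{\mathfrak q}A/\mathfrak q$ the idempotent $e$ maps to $1$ in the $\p$-component and to $0$ in every other one. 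The hypothesis $\sharp\supp^{-1}\{\p\}>1$ means that $k$ admits at least two distinct orderings $\alpha_1\neq\alpha_2$ compatible with $A^+$; let $K_j:=\rho(\alpha_j)$ be the corresponding real closures. Any $k$-isomorphism $K_1\to K_2$ would send the unique ordering of $K_1$ onto that of $K_2$ while fixing $k$, forcing $\alpha_1=\alpha_2$; hence $K_1\not\cong_k K_2$.

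Next I construct two real closures $*$ that differ only over $\p$. For each $\mathfrak q\neq\p$ fix a real closed field $L_{\mathfrak q}$ algebraic over $A/\mathfrak q$ and extending $A^+/\mathfrak q$, and set $R^{(j)}:=K_j\times\prod_{\mathfrak q\neq\p}L_{\mathfrak q}$, an epof of $A$. By Zorn choose an ideal $I_j$ of $R^{(j)}$ making $A\hookrightarrow R^{(j)}\to R^{(j)}/I_j$ essential, and put $B_j:=\ic(A,R^{(j)}/I_j)$. Exactly as in the proof of Corollary \ref{Bvnr_unique} — using that $A$ is Baer, that $R^{(j)}/I_j$ is then Baer, von Neumann regular and real closed $*$, together with Proposition \ref{ic-inrc} — each $B_j$ is a real closure $*$ of $A$.

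The heart of the argument is the behaviour over $\p$. Writing $R^{(j)}=K_j\times S$ with $S:=\prod_{\mathfrak q\neq\p}L_{\mathfrak q}$, every ideal of this two-factor product is a product ideal, so $I_j=I^{(1)}\times I^{(2)}$; since $e\mapsto(1,0)$, essentiality forbids $(1,0)\in I_j$ and forces $I^{(1)}=0$. Thus $R^{(j)}/I_j\cong K_j\times(S/I^{(2)})$, and taking integral closures componentwise (with $\ic(k,K_j)=K_j$, as $K_j/k$ is algebraic) gives $B_j\cong K_j\times C_j$ where $C_j:=\ic(A',S/I^{(2)})$. Consequently $\p$ has a single preimage in $\spec B_j$, namely the kernel of the projection onto $K_j$, with residue field exactly $K_j$. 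Now Theorem \ref{vNr_rcs_iso} applies: an $A$-isomorphism $B_1\cong_A B_2$ would require, at $x=\p$, an $(A/\p)$-isomorphism of the residue fields of $B_1$ and $B_2$ over $\p$, i.e.\ $K_1\cong_k K_2$, contradicting the first paragraph. Hence $B_1$ and $B_2$ are non-$A$-isomorphic real closures $*$, so $A$ has no unique real closure $*$.

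I expect the main obstacle to be the computation of the third paragraph: one must verify that quotienting by $I_j$ and passing to the integral closure leaves the $\p$-component $K_j$ untouched and that $\p$ has a unique preimage in $\spec B_j$. This is precisely where isolation is essential — it realises $\p$ as a direct factor, so that neither the (necessarily product-shaped) ideal $I_j$ nor the integral closure can alter the field $K_j$ sitting over $\p$, and Theorem \ref{vNr_rcs_iso} then reduces the whole question to the already-settled non-isomorphism $K_1\not\cong_k K_2$.
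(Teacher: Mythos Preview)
Your argument is correct and follows essentially the same strategy as the paper: exploit the isolation of $\p$ via an idempotent, build two epofs that differ only in the $\p$-component by $\rho(\alpha_1)$ versus $\rho(\alpha_2)$, pass to essential quotients, and read off a contradiction from the residue field over $\p$.

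There are two minor differences in execution. First, the paper chooses the ideal $I_i$ maximal \emph{inside} $eB_i$ (its $e$ satisfies $V(e)=\{\p\}$) and then checks that such an $I_i$ already makes $A\to B_i/I_i$ essential; you instead take an arbitrary Zorn-maximal $I_j$ and observe afterwards that its $\p$-component must vanish because the idempotent lies in $A$. Second, the paper finishes by contradiction: it assumes a unique real closure $*$ $C$ exists, embeds $C$ into both $B_i/I_i$, and deduces $C/\p_0\cong_{A/\p}\rho(\alpha)\cong_{A/\p}\rho(\beta)$; you instead construct $B_1,B_2$ explicitly as integral closures and invoke Theorem~\ref{vNr_rcs_iso} to rule out an $A$-isomorphism. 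Your route is slightly more direct and leans on the criterion already proved, while the paper's contradiction argument is self-contained at this point; both arrive at the same punchline $K_1\not\cong_{A/\p}K_2$.
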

\begin{proof}
Since $\p$ is isolated in $\spec A$ and $A$ is von Neumann regular, there exists an $e\in A$ 
such that $e^2=e$ and $V(e)=\{\p\}$ (see for example \cite{Huckaba} Corollary 3.3(4)). Set $B:=\prod_{\mathfrak q\in \spec A} A/\mathfrak q$, then 
we may regard $A$ as a subring of $B$ and we may write
$$e(\mathfrak q)=\left\{
\begin{array}{ll}
1  & \mathfrak q\neq \p\\
0	& \mathfrak q=\p \\
\end{array}
\right.$$

Let $I\unlhd B$ be such that (Zorn's Lemma)
\begin{itemize}
\item $\brac{0}\subset I\subset eB$
\item $I\cap A=\brac{0}$
\item $\forall x\in eB\backslash I\quad (I+xB)\cap A\neq \brac{0}$
\end{itemize}
We claim that 
$$\forall x\in B\backslash I\quad (I+xB)\cap A\neq \brac{0}$$
We prove by contradiction, suppose there exists an $x\in B\backslash I$ such that $(I+xB)\cap A=\brac{0}$.
Then clearly $x\not\in eB$ and that $x\in (1-e)B^*$ (note that $B$ is a von Neumann regular ring, thus
$B$ can be written as the direct sum of $eB$ and $(1-e)B$). Therefore $1-e\in xB$ which implies 
$1-e\in I+xB$. But $1-e\in A$ and this contradicts our assumption. 

The above implies then that $A\rightarrow B/I$ is an essential extension.

Now consider $\alpha,\beta \in \sper A$ such that $\alpha\neq \beta$ and $\supp(\alpha)=\supp(\beta)=\p$. Moreover, consider
the following epofs of $A$ 
$$B_1 =\rho(\alpha)\times \prod_{\mathfrak q\in \spec A\wo{\p}} K_{\mathfrak q}$$
$$B_2 =\rho(\beta)\times \prod_{\mathfrak q\in \spec A\wo{\p}} K_{\mathfrak q}$$
where for $\mathfrak q\in \spec A\wo{\p}$, $K_{\mathfrak q}$ is a real closed field which is a real field extension
of $A/{\mathfrak q}$. We now throughout let $i=1,2$ and $\dots$

Pick an ideal $I_i\unlhd B_i$ (Zorn's Lemma) such that 
\begin{itemize}
\item $\brac{0}\subset I_i\subset eB_i$
\item $I_i\cap A=\brac{0}$
\item $\forall x\in eB_i\backslash I_i\quad (I_i+xB_i)\cap A\neq \brac{0}$
\end{itemize}
One then shows analogous to the case for $B$, that $I_i\unlhd B_i$ makes $A\rightarrow B_i/I_i$ an essential extension. 

Now $eB_i + I_i$ is a maximal ideal of 
$B_i/I_i$ which restricts to $\p$ in $A$. Note also that because $A$ is a Baer ring, $B_i/I_i$ (\cite{raphael} Corollary 1.13) is Baer and 
$$\spec B_i/I_i \longrightarrow \spec A$$ 
(see for instance \cite{raphael} Proposition 1.16 or Remark 1.17) is a homeomorphism. 

Now assume that $A$ had a unique real closure $*$, say $C$. Then $C$ can be considered as an intermediate poring of $A$ and
$B_i/I_i$ (Because of \cite{Capco} Theorem 15 and Proposition \ref{ic-inrc}). One thus has the canonical poring injection of Baer von Neumann regular rings 
$$A\lhrarrow C\lhrarrow B_i/I_i \quad i=1,2 $$
and canonical homeomorphism of spectral spaces
$$\spec B_i/I_i \stackrel{\simeq}{\longrightarrow}  \spec C \stackrel{\simeq}{\longrightarrow} \spec A\quad i=1,2 $$
and real field extensions 
$$A/\p \longrightarrow C/\p_0 \longrightarrow B_i/B_ie$$
where $\p_0$ is the unique (by the homeomorphism of spectral spaces above) prime ideal in $\spec C$ such that 
$\p_0\cap A=\p$, which is the same as $(B_ie + I_i)\cap C$.

But 
$$B_1/B_1e = \rho(\alpha), \quad B_2/B_2e=\rho(\beta)$$
and because $C$ is a real closed $*$ von Neumann regular ring, $C/\p_0$ is a real closed field which is an 
algebraic extension of $A/\p$ and has real field extensions $\rho(\alpha)$ and $\rho(\beta)$. But the natural field extension
between $A/\p$ and $\rho(\alpha),\rho(\beta)$ is an algebraic extension. This will only mean that 
$$C/\p_0 \cong_{A/\p} \rho(\alpha) \cong_{A/\p} \rho(\beta)$$
which is a contradiction, as we assumed $\alpha\neq \beta$. Thus we can finally conclude that $A$ cannot have a unique 
real closure $*$
\end{proof}

\begin{lemma}\label{ideal_e}
Let $A$ be a real Baer von Neumann regular ring and let $\p\in \spec A$ and $B$ an epof of $A$. Denote $e\in B$ for the idempotent
in $B$ such that 
$$e(\mathfrak q)=\left\{
\begin{array}{ll}
1  & \mathfrak q\neq \p\\
0	& \mathfrak q=\p \\
\end{array}
\right.$$
then $\p$ is an isolated point of $\spec A$ iff
$$\exists I\unlhd B \st B/I \textrm{ is an essential extension of } A \textrm{ and } I\subset eB$$
\end{lemma}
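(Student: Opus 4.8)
The plan is to first unwind the hypothesis $I\subset eB$ geometrically. Since $B=\prod_{\mathfrak q\in\spec A}K_{\mathfrak q}$ with $e(\mathfrak q)=1$ for $\mathfrak q\neq\p$ and $e(\p)=0$, the principal ideal $eB$ is exactly the coordinate kernel $\{b\in B:b(\p)=0\}$, i.e. the maximal ideal $\mathfrak m_\p:=\ker(B\thrarrow K_\p)$; moreover $B=eB\oplus(1-e)B$ with $(1-e)B\cong K_\p$ a field. The key fact I would isolate at the outset is that $B/eB\cong K_\p$ is a field, so that $V(e)=\spec(B/eB)$ is the single point $\{\mathfrak m_\p\}$ of $\spec B$. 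I would also recall that, since $A$ is Baer von Neumann regular, any essential extension $B/I$ of $A$ is again Baer von Neumann regular (\cite{raphael} Corollary 1.13) and the induced map $\spec(B/I)\to\spec A$ is a homeomorphism (\cite{raphael} Proposition 1.16 / Remark 1.17); in particular it carries isolated points to isolated points.

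For the direction ``$\Leftarrow$'', suppose such an $I\subset eB$ exists. Let $\bar e\in B/I$ be the image of $e$, an idempotent of the von Neumann regular ring $B/I$, so $V(\bar e)$ is clopen in $\spec(B/I)$. Under the identification $\spec(B/I)=\{\mathfrak P\in\spec B:\mathfrak P\supseteq I\}$ one has $V(\bar e)=V(e)\cap V(I)=\{\mathfrak m_\p\}\cap V(I)$; and since $I\subset eB=\mathfrak m_\p$ we get $\mathfrak m_\p\supseteq I$, whence $V(\bar e)=\{\mathfrak m_\p/I\}$ is a single, clopen point. Thus $\mathfrak m_\p/I$ is isolated in $\spec(B/I)$, and as it lies over $\mathfrak m_\p\cap A=\p$, the homeomorphism $\spec(B/I)\cong\spec A$ forces $\p$ to be isolated in $\spec A$.

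For ``$\Rightarrow$'', suppose $\p$ is isolated. Exactly as in the opening of Proposition \ref{vNrB_isol}, von Neumann regularity produces an idempotent of $A$ cutting out $\{\p\}$, which by comparison of coordinates is precisely our $e$; in particular $1-e\in A$. I would then use Zorn's Lemma to choose $I$ maximal among the ideals contained in $eB$ with $I\cap A=\brac 0$, and verify that $A\to B/I$ is essential by the same case analysis as in Proposition \ref{vNrB_isol}: for $x\in eB\setminus I$ maximality gives $(I+xB)\cap A\neq\brac 0$ at once, while for $x\notin eB$ the component $(1-e)x$ is a nonzero element of the field $(1-e)B\cong K_\p$, hence invertible there, so $1-e\in xB\subseteq I+xB$ and $1-e\in A\wo 0$ gives $(I+xB)\cap A\neq\brac 0$. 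Since $I\subseteq eB$ by construction, this direction is complete.

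The routine part is the essentiality check in ``$\Rightarrow$'', which is essentially copied from Proposition \ref{vNrB_isol}; the genuine content is the ``$\Leftarrow$'' direction. The main obstacle there --- and the reason the hypotheses are needed --- is transferring isolation from $\spec(B/I)$ down to $\spec A$: this requires both that $B/I$ remain Baer von Neumann regular, so that $\bar e$ cuts out a clopen set, and that $\spec(B/I)\to\spec A$ be an honest homeomorphism, which is exactly where the Baer assumption on $A$ enters.
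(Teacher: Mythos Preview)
Your proof is correct and follows essentially the same approach as the paper. For ``$\Leftarrow$'' both arguments observe that $eB$ (your $\mathfrak m_\p$) is the unique point of $V_B(e)$, that it survives in $\spec(B/I)$ as a clopen singleton because $I\subset eB$, and then transfer isolation down via the homeomorphism $\spec(B/I)\cong\spec A$ coming from the Baer hypothesis; for ``$\Rightarrow$'' the paper simply points to the opening of Proposition~\ref{vNrB_isol}, which is exactly the Zorn-plus-case-split argument you have written out in full.
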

\begin{proof}
"$\Rightarrow$" The proof of this is contained in the beginning of the proof of Proposition \ref{vNrB_isol}.

\vspace{5mm}
"$\Leftarrow$" Since $I\subset eB$, $eB+I$ is a prime ideal of $B/I$. Now $eB$ is an isolated point of $\spec B$ (since $V_B(e)=\{eB\}$)
and so $eB+I$ is an isolated point of $\spec B/I$ (since by \cite{KS} p.99 Satz 7, $\spec B/I$ can be canonically regarded as a subspace 
of $\spec B$). But $A\hookrightarrow B/I$ is an essential extension and $A$ is Baer, therefore $\spec B/I$ is canonically homeomorphic
to $\spec A$ (by \cite{raphael} Remark 1.17). And so $(eB+I)\cap A=\p$ is an isolated point of $A$.
\end{proof}

\begin{exam}
Let $X$ be an extremally disconnected Stone space  with non-isolated points (such space if it has a non-isolated point, their count must 
be infinite. An example of such a space is the prime spectrum of infinite product of fields in the category of commutative
unitary rings). Let $K:=\Q(\sqrt 2)$ and consider 
$$A:=\{f:X\rightarrow K\sep \forall k\in K\quad f^{-1}(k) \textrm{ is open in } X\}$$
Also for any $x\in X$ and $S\subset A$ we set
$$S|x :=\{f(x)\sep f\in S\}\subset K$$

We shall now state some facts about $A$ and give partial proofs of them (as they are straightforward):
\begin{enumerate}
\newcounter{cnt}
\item $A$ has a canonical ring structure
\item $A$ is zero-dimensional and reduced (ie. regular) 

Take any $f\in A$, show that $g:X\rightarrow K$, defined by $g(x)=0$ iff $f(x)=0$ and $g(x)=f(x)^{-1}$ iff $f(x)\neq 0$, is in $A$ and 
in fact the quasi-inverse of $f$.
\item There is a canonical homeomorphism 
$$X\rightarrow \spec A \quad x\mapsto \p_x:=\{f\in A\sep f(x)=0\} $$
Thus $A$ is Baer (as $X$ is extremally disconnected.
\item\label{proj_prime} Because $X$ is Boolean, for any $x\in X$ we have the identity 
$$\p_x |z=\left\{
\begin{array}{ll}
K  & z\neq x\\
0	& z=x \\
\end{array}
\right.$$
\item For all $x\in X$, one has a well-defined and canonical isomorphism of real fields
$$A/\p_x \stackrel{\simeq}{\longrightarrow} K \qquad f\mod \p_x \mapsto f(x)$$
\setcounter{cnt}{\value{enumi}}\label{lastitem}
\end{enumerate}

This ring can be found in some literature, compare for instance the remark in the last paragraph of \cite{tic} p.779. For the proof of the facts
above one also sees similarities in the arguments found in \cite{AM} Chapter 1, Excercise 26.

Fix a non-isolated point $y\in X$
 and for $i\in\{0,1\}$ set $P_i$ to be \underline{the} total
ordering of $K$ such that $(-1)^{i}\sqrt 2 \in P_i$. Now set
$$A^+:=\{f\in A\sep f(x)\in P_0 \textrm{ if } x\neq y \textrm{ and } f(y)\in \sum K^2 \}$$
one easily checks that $A^+$ is a partial ordering of $A$.  
Without loss of generality we identify $X$ and $\spec A$.

We now claim:

\begin{enumerate}
\setcounter{enumi}{\value{cnt}}
\setlength{\itemindent}{-13pt}%

\item \underline{$\supp|(\sper A)\backslash\supp^{-1}\{y\}$ is injective}

For this we need to first study the real spectrum of $A$ (equiped with the partial ordering $A^+$. By definition if 
$\alpha\in \sper A$ then $A^+\subset \alpha$ and $-\alpha \cap \alpha=\supp(\alpha)\in \spec A$. Thus one easily checks 
,by the properties defining prime cones (see e.g. ADD Reference Bochnak) and by \ref{proj_prime}., that for all $x\in X$
either $\alpha|x = K$ (i.e. $\supp(\alpha)|x=K$) or that $\alpha|x$ is a total ordering of $K$ (i.e. $\supp(\alpha)|x=\{0\}$).

Since $A$ is von Neumann regular, every $x\in X\wo{y}$ has a $\supp$ preimage (i.e. $\supp$ is surjective), say 
$\alpha\in \sper A$ is a point in the preimage of $x$. Now for any $z\in X$, $A^+|z\subset \alpha |z$ so by the argument
in the previous paragraph $\alpha$ must have the property (because $A^+|x=P_0\subset \alpha |x$ and $\alpha|x$ is a total
ordering of $K$)
$$\alpha |z=\left\{
\begin{array}{ll}
K  & z\neq x\\
0	& z=x \\
\end{array}
\right.$$
this alone completely determines $\alpha$ (by the very definition of $A$ and the fact that $X$ is Boolean). Thus the claim 
holds.

\item \underline{$\sharp\supp^{-1}\{y\} = 2$}

If $\alpha\in\supp^{-1}\{y\}$ then this time $\supp(\alpha)|y =\{0\}$ implies that 
$\alpha|y$ is a total ordering of $K$ and 
$$A^+|y=\sum K^2 \subset \alpha|y$$
giving $\alpha|y$ the choice of being either 
$P_0$ or $P_1$ (and $\alpha|x = 0$ for $x\in X\wo{y}$). Thus giving use $2$ points in $\supp^{-1}\{y\}$.

\item \underline{The poring $A$ has a unique real closure $*$}

Let $C$ be a real closure $*$ of $A$  (so $C^+\supset A^+$), then by Proposition \ref{rcrs_to_epof} there exists an epof 
$B=\prod_{x\in X} K_x$ of $A$ and an ideal $I\unlhd B$ such that $A\rightarrow B/I$ is an essential extension of $A$ and 
$C=\ic(A,B/I)$. One
then has the commutative diagram of porings
$$
\begindc{\commdiag}[1]
\obj(0,60)[A]{$A$}
\obj(100,60)[B]{$B/I$}
\obj(0,0)[C]{$A/x$}
\obj(100,0)[D]{$B/\tilde x$}
\obj(180,30)[E]{$x\in X,\, \tilde x\in V_B(I+x)$}
\mor{A}{B}{}[1,3]
\mor{B}{D}{$\phi_{2}^x$}[1,5]
\mor{A}{C}{$\phi_{1}^x$}[-1,5]
\mor{C}{D}{}[1,3]
\enddc
$$
with all the maps being canonical (note that $\sharp V_B(I+x)=1$ because $A$ is Baer and $B/I$ an essential extension of $A$).

Let $f\in A$ such that $f(x)=\sqrt 2$ for all $x\in X$, and set 
$$\pi :\prod_{x\in X} K_x \longrightarrow \prod_{x\in X\wo{y}} K_x$$
to be the canonical (poring) projection of $B$. Observe now that from the choice of $A^+$, one has $\pi(f)\in \pi(B)^+$.

By Lemma \ref{ideal_e} $I\not\subset eB$ where $e\in B$ is defined by 
$$e(x)=\left\{
\begin{array}{ll}
1  & x\neq y\\
0	& x=y \\
\end{array}
\right.$$
this implies at once that $1-e\in I$ and so there is a well-defined isomorphism of porings 
$$B/I \longrightarrow \pi(B)/\pi(I) \qquad b+I\mapsto \pi(b)+\pi(I)$$
Note that the working rings are real von Neumann regular, therefore $I$ and $\pi(I)$ are real ideals. 

All in all, one can conclude that (by identifying $B/I$ with $\pi(B)/\pi(I)$) 
$$\sqrt 2= \phi_1^x (f)=\phi_2^x(\pi(f))\in (B/\tilde x)^+ \qquad x\in X$$

Now $\spec C,\spec A, \spec (B/I)$ are canonically homeomorphic (spectral maps of essential extensions of Baer regular rings). 
Also 
$C/xC =\ic (A/x,B/\tilde x)$ for all $x\in X$ (note that $xC\in \spec C$, 
for an idea why see \cite{raphael} proof of Lemma 1.14 and Remark 1.17, 
and is the unique prime ideal of $C$ lying over $x$). And because $\sqrt 2\in (C/xB) \cap (B/\tilde x)^+$ we conclude that 
$C/xB \cong_{A/x} R_0$ where $R_0$ is the field of real algebraic numbers. 

The choice of $x\in X$ and $C$ as the real closure $*$ of $A$ was
arbitrary, therefore one can say that family of factor fields (indexed by $X$) of any two real closure $*$ of $A$ are the same (upto 
suitable $A/x$-isomorphism for some $x\in X$). With Theorem \ref{vNr_rcs_iso} we can therefore conclude that $A$ has a unique 
real closure $*$.

\comment{
\item There is an $X\subset\sper A$ 
such that for any epof $B$ of $A$ and any ideal $I$ of $B$ making $A\hookrightarrow B\rightarrow B/I$ an essential extension we have
$$
X=\{\alpha\in\sper A : \rho(\alpha)\cong_{A(\alpha)} C(\alpha,\p) \textrm{ for some } \p\in V_B(I+\supp(\alpha))\}
$$
where $A(\alpha):=A/\supp(\alpha)$ and $C(\alpha,\p):=\ic(A/\supp(\alpha),B/\p)$.
}
\end{enumerate}
\end{exam}

The example above just shows us that there are von Neumann regular rings with unique real closure $*$ and yet they are not $f$-rings. In 
fact we have a general case below

\begin{theorem}
Let $A$ be a real von Neumann regular ring and 
$$X:=\{\p\in \spec A \sep \sharp\supp^{-1}(\p)=1\}$$
be dense in $\spec A$, then $A$ has a unique real closure $*$
\end{theorem}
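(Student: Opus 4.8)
The plan is to prove the theorem through the criterion of Theorem~\ref{vNr_rcs_iso}. Let $B$ and $C$ be two real closure $*$ of $A$; by \cite{Capco} Proposition 14 and \cite{raphael} Corollary 1.10 both are real closed von Neumann regular rings, hence $f$-rings, that are integral and essential over $A$, and by \cite{raphael} Lemma 1.14 the spectral maps $\spec B\to\spec A$ and $\spec C\to\spec A$ are surjective. For $y\in\spec B$ the residue field $B/y$ is a real closed field algebraic over $A/(y\cap A)$, so restricting its unique ordering to $A$ yields a prime cone $\Phi_B(y)\in\sper A$ with support $y\cap A$ and $\rho(\Phi_B(y))\cong_{A/(y\cap A)}B/y$. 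Write $S_B:=\Phi_B(\spec B)\subset\sper A$ for the set of prime cones \emph{realised} by $B$, and define $S_C$ likewise. Since $\spec B\to\spec A$ is onto we have $\supp(S_B)=\spec A$, so once $S_B=S_C$ is known I can, for each $\p\in\spec A$, select a common cone $\beta\in S_B=S_C$ over $\p$ and primes $y_\p\in\spec B$, $z_\p\in\spec C$ mapping to it; then $B/y_\p\cong_{A/\p}\rho(\beta)\cong_{A/\p}C/z_\p$ and Theorem~\ref{vNr_rcs_iso} gives $B\cong_A C$. Thus the whole statement reduces to the identity $S_B=S_C$.

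I would then show that $S_B$ (and by symmetry $S_C$) equals the constructible closure $\overline{\supp^{-1}(X)}$ of $\supp^{-1}(X)$ in $\sper A$, where $X=\{\p\in\spec A:\sharp\supp^{-1}(\p)=1\}$. For the containment $\overline{\supp^{-1}(X)}\subset S_B$: since $B$ is a von Neumann regular $f$-ring, for every $a\in A$ the sets $\{y:a(y)>0\}$ and $\{y:a(y)=0\}$ are clopen in $\spec B$, so $\Phi_B$ pulls each subbasic set of $\sper A$ back to a clopen set and is continuous for the constructible topologies; as $\spec B$ is constructibly compact, $S_B$ is constructibly closed. Moreover $\supp^{-1}(X)\subset S_B$, because for $\p\in X$ the field $A/\p$ has a \emph{unique} ordering compatible with $A^+$, so any prime of $B$ over $\p$ (one exists by surjectivity) realises exactly the unique cone over $\p$, and conversely every point of $\supp^{-1}(X)$ arises so. Hence $\overline{\supp^{-1}(X)}\subset S_B$.

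The crux, and the step I expect to be the main obstacle, is the reverse inclusion $S_B\subset\overline{\supp^{-1}(X)}$, where essentiality and density enter together. Suppose some $\beta\in S_B$ lay outside $\overline{\supp^{-1}(X)}$; then a basic constructible neighbourhood $U=\bigcap_i\{a_i>0\}\cap\bigcap_j\{b_j=0\}$ of $\beta$ would miss $\supp^{-1}(X)$. Its preimage $W=\Phi_B^{-1}(U)\subset\spec B$ is clopen and nonempty, so $W=D(\epsilon)$ for a nonzero idempotent $\epsilon\in B$; essentiality of $A\hookrightarrow B$ then forces $\epsilon B\cap A\neq\langle0\rangle$, producing $0\neq a\in A$ with $a(y)=0$ for all $y\notin W$. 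Now $D(a)\subset\spec A$ is nonempty open, so by density of $X$ it contains some $\q\in X$; any prime $y$ of $B$ over $\q$ has $a(y)\neq0$ (as $A/\q\hookrightarrow B/y$), hence $y\in W$, so the unique cone $\Phi_B(y)$ over $\q$ satisfies all the inequalities defining $U$. This puts a point of $\supp^{-1}(X)$ into $U$, contradicting $U\cap\supp^{-1}(X)=\emptyset$. Therefore $S_B=\overline{\supp^{-1}(X)}$, the identical computation gives $S_C=\overline{\supp^{-1}(X)}$, so $S_B=S_C$ and the proof closes. I anticipate the only delicate points to be the clopenness of the sign-sets in the regular $f$-ring $B$ (used both for pro-constructibility and to extract $\epsilon$) and the bookkeeping of supports when pulling $U$ back; it is worth emphasising that this argument never uses that $A$ itself is Baer.
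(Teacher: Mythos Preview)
Your argument is correct, but it is genuinely different from the paper's. The paper proceeds much more directly: given two real closures $*$ $C_1,C_2$, it picks for each $x\in X$ primes $z_i^x\in\spec C_i$ over $x$, observes that $C_1/z_1^x\cong_{A/x}C_2/z_2^x$ because $A/x$ has a \emph{single} ordering, and then forms the single product $B=\prod_{x\in X}C_1/z_1^x$. Density of $X$ makes $A\hookrightarrow B$ injective, so both $C_i$ map to $B$ and hence embed in a common essential quotient $B/I$; both are then forced to coincide with $\ic(A,B/I)$. No real-spectrum bookkeeping, no constructible topology, and Theorem~\ref{vNr_rcs_iso} is not invoked at all.

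Your route instead \emph{characterises} the set $S_B$ of prime cones realised by any real closure $*$ as the constructible closure of $\supp^{-1}(X)$, and only then appeals to Theorem~\ref{vNr_rcs_iso}. This is more structural and yields an extra piece of information (the intrinsic description of $S_B$) that the paper's proof does not. The price is the auxiliary machinery: you need $B$ to be an $f$-ring to get clopen sign-sets, continuity of $\Phi_B$ for the constructible topologies, and pro-constructibility of $S_B$ via compactness. One small cosmetic point: the ``basic constructible neighbourhood'' you write as $\bigcap_i\{a_i>0\}\cap\bigcap_j\{b_j=0\}$ does not exhaust a basis of the constructible topology (you are missing sets of the form $\{b\le 0\}$), but this is harmless since all you actually use is that $\Phi_B^{-1}$ of \emph{any} constructible set is clopen, which follows from your observation about the subbasic sets $\{a>0\}$ and $\{a=0\}$. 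The essentiality/density step extracting $a\in A$ from the idempotent $\epsilon$ is exactly right and is where the hypotheses bite.
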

\begin{proof}
Let $C_1$ and $C_2$ be two real closure $*$ of $A$ and let $i\in\{1,2\}$. Denote $Z_i=\spec C_i$, then the canonical morphism 
$$ Z_i \twoheadrightarrow \spec A \qquad z\mapsto z\cap A$$
is surjective. 

So for each $x\in X$ fix $z_i^x \in Z_i$  such that $z_i^x\cap A=x$. We then have the canonical poring morphism 
$$A\longrightarrow C_i \longrightarrow  \prod_{x\in X} C_i/z_i^x$$
which is a monomorphism (i.e. $A$ is embedded as poring in $\prod_{x\in X} C_i/z_i^x$) since $X$ is dense in $\spec A$.

Moreover 
$$\prod_{x\in X} C_1/z_2^x = \prod_{x\in X} C_1/z_2^x$$
Because for all $x\in X$, $C_i/z_i^x$ is a real closed field algebraic over $A/x$ and a real field extension of it,  but $A/x$ 
has only one (upto $A/x$-isomorphism) real field extension which is a real closed field (by the very definition of $X$).
So set 
$$B:=\prod_{x\in X} C_1/z_1^x$$
we then have the following commutative diagram of porings 
$$
\begindc{\commdiag}[1]
\obj(0,35)[A]{$A$}
\obj(95,70)[B]{$C_1$}
\obj(95,0)[C]{$C_2$}
\obj(185,35)[D]{$B$}
\mor{A}{B}{}[1,3]
\mor{B}{D}{}
\mor{A}{C}{}[1,3]
\mor{C}{D}{}
\enddc
$$
Now by Zorn's Lemma there is an $I\unlhd B$ such that 
$$A\lhrarrow B\twoheadrightarrow B/I$$
is an essential extension of $A$. Furthermore we learn that $C_i \rightarrow B/I$ itself is an essential extension (of $C_i$) 
as $A\hookrightarrow C_i$ and $A\rightarrow B$ are essential extensions (of $A$).  We thus now have the commutative diagram 
of porings below 
$$
\begindc{\commdiag}[1]
\obj(0,35)[A]{$A$}
\obj(95,70)[B]{$C_1$}
\obj(95,0)[C]{$C_2$}
\obj(185,35)[D]{$B/I$}
\mor{A}{B}{}[1,3]
\mor{B}{D}{}[1,3]
\mor{A}{C}{}[1,3]
\mor{C}{D}{}[1,3]
\enddc
$$
We then finally see that (by the fact that $C_i$ is a real closure $*$ of $A$) 
$$C_i\cong_A \ic(A,B/I) \qquad i=1,2$$
which means that
$$C_1\cong_A C_2$$
\end{proof}

Finally we give a characterisation of real Baer von Neumann regular rings having unique real closure $*$.

\begin{nota}
Given a poring $A$ and $a_1,\dots,a_n\in A$ we use the notations
\begin{eqnarray*}
P_A(a_1,\dots,a_n) & :=  & \{\alpha\in\sper A \sep a_1,\dots,a_n\in \alpha\backslash\supp_A(\alpha)\} \\
& = &\{\alpha\in\sper A \sep a_1(\alpha),\dots,a_n(\alpha)>0\}
\end{eqnarray*}
where by $a_i(\alpha)$ we mean $a_i \mod \supp(\alpha)$ considered as an element of $\rho(\alpha)$.

If it is clear with which ring we are working with, we drop the subscript $A$ above and we just write 
$P(a_1,\dots,a_n)$.
\end{nota}

\begin{theorem}
Let $A$ be a formally real Baer, von Neumann regular ring. Then $A$ has no unique real closure $*$ iff there exists an $x\in A$ such that
$$[\supp P(x)\cap\supp P(-x)]^\circ \neq \emptyset$$
\end{theorem}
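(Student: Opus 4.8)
The plan is to read the right-hand condition as a statement about \emph{sign indefiniteness}. For $x\in A$ write $S_x:=\supp P(x)\cap\supp P(-x)$; since $A$ is von Neumann regular, $A/\p$ is a field for every $\p\in\spec A$, and $\p\in S_x$ means exactly that the residue $x\mod\p$ is nonzero and can be made both positive and negative by orderings of $A/\p$ extending $A^+/\p$ (equivalently, two prime cones over $\p$ disagree on the sign of $x$). Recall also that $A$ being Baer regular makes $\spec A$ a Boolean space, so its open sets are unions of the basic clopen sets $D(e)=\{\p:e\notin\p\}$ for idempotents $e=e^2\in A$. I would prove the two implications separately, in each case reducing the comparison of closures to the factorwise criterion of Theorem~\ref{vNr_rcs_iso}.

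For ``$\Leftarrow$'' assume some $S_x$ has nonempty interior and pick an idempotent $e$ with $\emptyset\neq D(e)\subseteq[S_x]^\circ$. Since real closures $*$ are compatible with the clopen decomposition $\spec A=D(e)\sqcup D(1-e)$, it suffices to produce two non-$(eA)$-isomorphic real closures $*$ of $eA$ and recombine each with a fixed closure of $(1-e)A$. On $D(e)$ \emph{every} prime is sign indefinite for $x$, so I can form an epof $B^{(1)}=\prod_{\p\in D(e)}\rho(\alpha_\p)$ with $x>0$ in each factor and, likewise, an epof $B^{(2)}$ with $x<0$ throughout; taking essential ideals $I_j$ (Zorn), the rings $C_j=\ic(eA,B^{(j)}/I_j)$ are genuine real closures $*$ by Propositions~\ref{ic-inrc} and~\ref{rcrs_to_epof}. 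The crucial point is that a positive element of a real closed field is a square, so $x\in(B^{(1)})^+$; as the partial order of a quotient poring is the image of $(B^{(1)})^+$, this gives $x\in(B^{(1)}/I_1)^+$ and hence $x>0$ in \emph{every} residue field of $C_1$, while symmetrically $-x\in(B^{(2)})^+$ forces $x<0$ throughout $C_2$. This global positivity is coordinate-free and is exactly what bypasses the essentiality collapse seen in the Example (where only one non-isolated prime was flexible, and the flip was overridden): performing the flip on a \emph{whole} clopen set keeps the flipped element inside the positive cone of the epof. Since any $(eA)$-isomorphism fixes $x\in eA$ and preserves order, no residue field of $C_1$ is $(eA/\p)$-isomorphic to the corresponding one of $C_2$, and Theorem~\ref{vNr_rcs_iso} yields $C_1\not\cong_{eA}C_2$.

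For ``$\Rightarrow$'' I argue the contrapositive: if every $S_x$ has empty interior, then any two real closures $*$ $C_1,C_2$ are $A$-isomorphic. First I establish that in a real closed von Neumann regular ring $C$, for any $x\in A\subset C$, the locus $\{x>0\}\subseteq\spec C$ is clopen: being a real closed $f$-ring (\cite{Capco} Proposition 14), $C$ admits $|x|=\sqrt{x^2}$ and the parts $x^\pm=\tfrac12(|x|\pm x)\ge 0$ with $x^+x^-=0$, whence $\{x>0\}=V(x^-)\cap D(x^+)$, clopen because $V(\cdot)$ and $D(\cdot)$ are clopen in a regular spectrum. Identifying $\spec C_1\cong\spec A\cong\spec C_2$ (essential extensions of a Baer regular ring induce spectral homeomorphisms), the clopen sets $\{x>0\}_{C_1}$ and $\{x>0\}_{C_2}$ can differ only over primes where the sign of $x$ is not forced, i.e.\ only inside $S_x$; thus their symmetric difference is a clopen subset of a set with empty interior, hence empty. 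So $\{x>0\}_{C_1}=\{x>0\}_{C_2}$ for all $x\in A$, the orderings induced on each $A/\p$ by $C_1$ and $C_2$ coincide, and therefore $C_1/\p_0$ and $C_2/\p_0$ are $(A/\p)$-isomorphic real closures of the same ordered field; Theorem~\ref{vNr_rcs_iso} gives $C_1\cong_A C_2$.

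The main obstacle is the ``$\Leftarrow$'' direction, precisely the tension with the Example: one must be certain that taking an essential quotient and then the integral closure cannot silently repair the sign of $x$ the way it repaired $\sqrt2$ at the lone non-isolated point. The clean way around this is never to argue coordinatewise, but to transport the \emph{single} global relation $x\in(B^{(j)})^+$ (resp.\ $-x\in(B^{(j)})^+$) through the quotient, where it is automatically preserved. A lesser technical point, needed in ``$\Rightarrow$'', is the clopenness of $\{x>0\}$ in the closure; this is where real closedness — square roots and the $f$-ring lattice structure — is used essentially, and it is what converts ``empty interior of $S_x$'' into genuine rigidity of the orderings.
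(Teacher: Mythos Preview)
Your proof is correct. The ``$\Leftarrow$'' direction is essentially the paper's argument: the paper also builds two epofs disagreeing on the sign of $x$ over a clopen $U$ and transports the single global relation $x\in B^+$ (resp.\ $-x\in B^+$) through the essential quotient --- exactly the mechanism you isolate as ``the clean way around'' the Example. The only cosmetic difference is that the paper avoids your product decomposition $A=eA\times(1-e)A$ by working instead with the element $f\in A$ equal to $x$ on $U$ and $0$ off $U$; it then concludes $C_1\not\cong_A C_2$ directly from $f\in C_1^+\setminus\{0\}$ and $-f\in C_2^+\setminus\{0\}$, rather than via Theorem~\ref{vNr_rcs_iso}.

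Your ``$\Rightarrow$'' is the contrapositive of the paper's direct argument, but the key object is the same. The paper starts from two non-$A$-isomorphic closures, finds $\p$ and distinct cones $\alpha_1,\alpha_2$ over $\p$, picks $x\in\alpha_1\setminus\alpha_2$, and exhibits the nonempty open set
\[
\psi_1\bigl(\phi_1^{-1}P(x)\bigr)\ \cap\ \psi_2\bigl(\phi_2^{-1}P(-x)\bigr)\ \subseteq\ \supp P(x)\cap\supp P(-x),
\]
where $\phi_i:\spec C_i\to\sper A$ (after identifying $\spec C_i=\sper C_i$) and $\psi_i:\spec C_i\xrightarrow{\ \simeq\ }\spec A$ are the canonical spectral maps. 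In your language that set is precisely $\{x>0\}_{C_1}\cap\{x<0\}_{C_2}$ transported to $\spec A$, so the two proofs are manipulating the same positivity loci. What differs is how openness is obtained: the paper gets it for free from continuity of $\phi_i$ and $\psi_i$, whereas you prove \emph{clopenness} via the $f$-ring decomposition $x=x^+-x^-$. Your route buys the pleasant global statement ``the induced orderings on every $A/\p$ coincide'' (and makes the role of real-closedness transparent), at the price of the extra $x^\pm$ computation; the paper's route is shorter but less self-contained about why the witnessing set is open.
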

\begin{proof}
"$\Rightarrow$" Let $C_1$ and $C_2$ be two real closure $*$ of $A$ such that they are not $A$-isomorphic. 
Then by Theorem \ref{vNr_rcs_iso} there exists a $\p\in\spec A$ such that 
$$C_1/\p C_1\not\cong_{A/\p} C_2/\p C_2$$
note that $\p C_i\in \spec C_i$ is a unique prime ideal in $\spec C_i$ lying over $\p\in\spec A$ 
(here we used the fact that $A$ is Baer, see \cite{raphael} proof of Proposition 1.16).  Thus there are 
distinct $\alpha_1,\alpha_2\in \supp^{-1}(\p)$ such that 
$$C_i/\p C_i \cong_{A/\p} \rho(\alpha_i)\qquad i=1,2$$

Let now $i=1,2$, we then have the following commutative diagram of spectral spaces
$$
\begindc{\commdiag}[1]
\obj(0,60)[A0]{$\sper A$}
\obj(20,50)[A]{$\alpha_i$}
\obj(100,60)[B0]{$\spec C_i$}
\obj(70,50)[B]{$\p C_i$}
\obj(0,-20)[C0]{$\spec A$}
\obj(20,10)[C]{$\p$}
\obj(150,30){$i=1,2$}
\mor{B0}{A0}{$\phi_i$}[-1,0]
\mor{B}{A}{}[1,4]
\mor{B0}{C0}{$\psi_i$}
\mor{B}{C}{}[1,4]
\mor{A0}{C0}{$\supp_A$}[-1,0]
\mor{A}{C}{}[1,4]
\enddc
$$
where $\phi_i$ is the canonical map between the real spectrum of $C_i$ and the real spectrum of $A$ after identifying $\spec C_i$ and $\sper C_i$ (as $\supp_{C_i}$ 
is a homeomorphism), and $\psi_i$ is the canonical map between the prime spectrum of $C_i$ and that of $A$ (by contracting the prime ideal 
of $C_i$ to $A$). By \cite{raphael} Remark 1.17 (here $A$ Baer is used), $\psi_i$ is a homeomorphism of spectral spaces. 

Choose now $x\in\alpha_1\backslash \alpha_2$. Then we have 
$$\supp_A P(x)\supset \psi_1(\phi_1^{-1}P(x))$$ 
and $\alpha_1\in P(x)$ implies that $\p\in\psi_1(\phi_1^{-1}P(x))$. Similarly 
$$\supp_A P(-x)\supset \psi_2(\phi_2^{-1}P(-x))$$ 
with $\alpha_2\in P(-x)$. Thus 
$$\psi_1(\phi_1^{-1}P(x))\cap\psi_2(\phi_2^{-1}P(-x))$$
is a nonempty open subset ($\p$ is contained in it) 
of 
$$\supp P(x)\cap\supp P(-x)$$

\vspace{5mm}
"$\Leftarrow$" Let $x\in A$ such that there exists an nonempty open set 
$$U\subset \supp P(x)\cap\supp P(-x)$$
Without loss of generality we may assume that $U$ is clopen (as $\spec A$ is a Stone space). Define 
$$f:\spec A \longrightarrow \dot{\bigcup_{\p\in\spec A}} A/\p $$
by 
$$f(\p)=\left\{
\begin{array}{ll}
x\mod\p  & \p\in U\\
0	& \p\not\in U \\
\end{array}
\right.$$
Since $A$ is a von Neumann regular ring, we can identify $A_\p$ and $A/\p$ (i.e. they are canonically isomorphic as fields) and regard 
$f$ as an element of the global section of the (canonical) sheaf structure of $A$. Thus we may regard $f$ as an element of $A$ and 
identify $f\mod \p$ with the germ $f(\p)$ in the stalk $A_\p\cong A/\p$.

Choose the following epofs of $A$ 
$$B_1 = \prod_{\p\not\in U} \rho(\gamma_\p) \times \prod_{\p\in U} \rho(\alpha_\p)$$
$$B_2 = \prod_{\p\not\in U} \rho(\gamma_\p) \times \prod_{\p\in U} \rho(\beta_\p)$$
where for $\p\not\in U$, $\gamma_\p\in\supp^{-1}(\p)$ (note that $\supp_A$ here is surjective) and for $\p\in U$
$$\alpha_p,\beta_\p\in\supp^{-1}(\p)$$
is chosen in such a way that $\alpha_\p\in P(x)$ and $\beta_\p\in P(-x)$.

One now observes at once that $f>0$ in $B_1$ and $f<0$ in $B_2$. Moreover, we have the following commutative diagram of 
porings
$$
\begindc{\commdiag}[1]
\obj(0,60)[A]{$A$}
\obj(50,60)[B]{$B_i$}
\obj(100,60)[B2]{$B_i/I_i$}
\obj(0,0)[C]{$C_i$}
\obj(100,0)[D]{$B(B_i/I_i)$}
\obj(180,30)[E]{$i=1,2$}
\mor{A}{B}{}[1,3]
\mor{B}{B2}{}[1,5]
\mor{B2}{D}{}[1,3]
\mor{A}{C}{}[1,3]
\mor{C}{D}{}[1,3]
\enddc
$$
where the right vertical map is the canonical morphism (taking the factor module $I_i$) and $I_i$ is an ideal of $B_i$ making 
$A\rightarrow B_i/I_i$ an essential extension of $A_i$, and $C_i:=\ic(A,B(B_i/I_i))$ (which we know, by Proposition  \ref{ic-inrc} and
\cite{Capco}  Theorem 16, to be a real closure $*$ of $A$). $A$ can be regarded as a subring of $B_i/I_i$ and we know in particular that 
$$f\in (B_1/I_1)^+\wo{0}\Rightarrow f\in B(B_1/I_1)^+\wo{0} \textrm{ and } f\in -(B_2/I_2)^+\wo{0} \Rightarrow f\in -B(B_2/I_2)^+\wo{0}$$ 
Thus
$$f\in C_1^+\wo{0} \textrm{ and } f\in -C_2^+\wo{0}$$
making us conclude that 
$$C_1\not\cong_A C_2$$
\end{proof}

\begin{ack}
I would like to thank Prof. Niels Schwartz for his most valuable advises.
\end{ack}

\end{document}